\documentclass[12pt]{amsart}
\usepackage{amsmath}
\usepackage{amsfonts}
\usepackage{hyperref}
\usepackage{amssymb}
\usepackage{latexsym}
\usepackage{amsthm}
\usepackage{a4wide}

\usepackage{graphicx}

\usepackage[ansinew]{inputenc}
\usepackage{amsfonts,epsfig}
\usepackage{mathabx}

\newtheorem{theorem}{Theorem}

\newtheorem{coro}[theorem]{Corollary}

\newtheorem{proposition}[theorem]{Proposition}
\newtheorem{rk}[theorem]{Remark}
\newcounter{other}            

\theoremstyle{definition}

\DeclareMathAccent{\widecheck}{0}{mathx}{"71}

\makeatletter
\newcommand*{\rom}[1]{\expandafter\@slowromancap\romannumeral #1@}
\makeatother

\numberwithin{equation}{section}
\begin{document}

\title[Optimal domain of integration operators]{Recovering Hardy spaces from optimal domains of integration operators}

\author[Setareh Eskandari]{Setareh Eskandari}
\address{Setareh Eskandari  \\Department of Mathematics and Mathematical Statistics \\
	Ume{\aa} University\\
	90187 Ume{\aa}\\
	SWEDEN.}\email{setareh.eskandari@umu.se}

\author[Antti Per\"{a}l\"{a}]{Antti Per\"{a}l\"{a}}
\address{Antti Per\"{a}l\"{a} \\Department of Mathematics and Mathematical Statistics \\
	Ume{\aa} University\\
	90187 Ume{\aa}\\
	SWEDEN.} \email{antti.perala@umu.se}



%
\subjclass[2020]{Primary 30H10; Secondary 47G10.}

\keywords{Volterra operator, Hardy space, optimal domain}

\thanks{S. Eskandari was funded by the postdoctoral scholarship JCK22-0052 from the Kempe Foundations. We would like to thank the anonymous referee for the valuable comments that have improved the paper.}


\begin{abstract}
We study the optimal domains for bounded Volterra integration operators $T_g$ between Hardy spaces $H^p$ and $H^q$ of the unit ball. It is shown that the optimal domain of a bounded $T_g:H^p\to H^q$ always strictly contains $H^p$. Moreover, the intersection of the optimal domains is equal to $H^p$ if $p\geq q$, whereas if $p<q$, we show that this intersection is a genuinely larger tent space of holomorphic functions. In the unit disk, this problem was recently solved for $p=q$ by Bellavita, Daskalogiannis, Nikolaidis and Stylogiannis.
\end{abstract}

\maketitle



\section{Introduction}

\noindent Given $z=(z_1,z_2,...,z_n)$ and $\zeta=(\zeta_1,\zeta_2,...,\zeta_n)$, two vectors of the complex space $\mathbb{C}^n$, we denote $$\langle z,\zeta\rangle=\sum_{k=1}^n z_k\overline{\zeta_k}.$$ We set $|z|=\sqrt{\langle z,z\rangle}$ and let $\mathbb{B}_n=\{z\in \mathbb{C}^n:|z|<1\}$ be the open unit ball, and $\mathbb{S}_n=\partial \mathbb{B}_n=\{z\in\mathbb{C}^n: |z|=1\}$ its boundary, the unit sphere. Let $H(\mathbb{B}_n)$ denote the space of all holomorphic functions on $\mathbb{B}_n$, equipped with the topology of uniform convergence on compact sets. We use notation $R$ for the radial derivative: 

$$Rf(z)=\sum_{k=1}^n z_k\partial_{z_k}f(z).$$

Given $g \in H(\mathbb{B}_n)$ the operators $T_g$ and $S_g$ acting on $H(\mathbb{B}_n)$ given by
$$T_g f(z)=\int_0^1 f(tz)Rg(tz)\frac{dt}{t},\quad S_g f(z)=\int_0^1 Rf(tz)g(tz)\frac{dt}{t}$$
are well-defined. The operator $T_g$ is called the (generalized) Volterra integration operator, while $S_g$ is called the Volterra companion operator. Pommerenke \cite{Pom} was the first to study $T_g$ in one dimension; the variant we use for the unit ball appears first in the work of Hu \cite{Hu}.

Assume now that $X$ and $Y$ are two topological vector spaces of holomorphic functions on $\mathbb{B}_n$. Inspired by \cite{SZ}, we consider
$$V_{X,Y}=\{g \in H(\mathbb{B}_n): T_g \text{ is continuous } X\to Y\}.$$
Clearly $V_{X,Y}$ is always a vector space. If $X$ and $Y$ are quasi-Banach spaces, then $V_{X,Y}$ is a quasinormed space equipped with the topology coming from $\|g\|_{V_{X,Y}}=\|T_g\|_{X\to Y}+|g(0)|$. Moreover, if convergence in both $X$ and $Y$ implies uniform convergence on compact sets (that is, $X$ and $Y$ continuously embed into $H(\mathbb{B}_n)$), and they both contain the constants, then $V_{X,Y}$ is also complete. This fact is proven in \cite{SZ} in the setting of one complex variable when $T_g$ acts $X\to X$, with a slightly less restrictive requirements on $X$. The reader should have no difficulties verifying that it also holds for $T_g:X\to Y$ under the assumptions above. 

The classical problem for $T_g$ is to study its boundedness between two Hardy spaces. We recall that for $0<p<\infty$, the Hardy space $H^p$ consists of $f \in H(\mathbb{B}_n)$ with
$$\|f\|_{H^p}=\sup_{0\leq s <1}\left(\int_{\mathbb{S}_n}|f(s \zeta)|^pd\sigma(\zeta)\right)^{1/p}<\infty,$$
where $d\sigma$ is the normalized surface measure on $\mathbb{S}_n$. When $p\geq 1$, the norm $\|\cdot\|_{H^p}$ makes $H^p$ a Banach space. When $0<p<1$, the space $H^p$ is a complete quasi-Banach space. Moreover, in that case $d_p(f,g)=\|f-g\|_{H^p}^p$ is a complete translation invariant metric on $H^p$, making it an F-space in the sense it is used by Rudin \cite{Rud} (F-spaces need not be locally convex). For basic properties of these spaces, we refer the reader to \cite{Dur, ZhuHol, Zhu}.

For $T_g:H^p\to H^q$, we write $V_{p,q}=V_{H^p,H^q}$. These spaces are completely known. In fact (see  and \cite{Pau}, or \cite{AC, AS} for earlier proofs in the case $n=1$),

\begin{itemize}
\item $p=q$:\quad $V_{p,q}=BMOA$;
\item $p<q$:\quad $V_{p,q}=\mathcal{B}^\alpha$;
\item $p>q$:\quad $V_{p,q}=H^r$.
\end{itemize}
Here $r=pq/(p-q)$, $\alpha=1+n/q-n/p$ and $\mathcal{B}^\alpha$ is the Bloch type space consisting of analytic functions $g$ with
$$\|g\|_{\mathcal{B}^\alpha}=\sup_{z\in\mathbb{B}_n}(1-|z|^2)^\alpha |Rg(z)|<\infty,$$ and we understand this space to consist of constants only, if $\alpha<0$ (in which case $T_g$ is the zero operator). The space $BMOA$ is the space of holomorphic functions of bounded mean oscillation -- the precise definition is not used in this paper, but it can be found in \cite{ZhuHol} and in \cite{Dur, Zhu} for $n=1$.

Given $X$, $Y$, and $g \in V_{X,Y}$, we may ask which holomorphic functions $f$ are mapped into $Y$ by $T_g$. This vector space of holomorphic functions will be called the optimal domain of $T_g$ and denoted
$$[g:Y]=\{f \in H: T_g f\in Y\}.$$
This space $[g:Y]$ can be equipped with the quasinorm $\|f\|_{[g:Y]}=\|T_gf\|_Y$. The study of optimal domains for integration operators was initiated by Curbera and Ricker \cite{CR1, CR2}, who looked into this concept for the Ces\'aro operator. In the recent years, there has been a number of interesting papers on the topic, see \cite{ABR1, ABR2, BDNS}, or \cite{BBNS, LS} for meromorphic optimal domains in the context of Hardy and Bergman-Morrey spaces. We mention that all the aforementioned papers are written in the setting of the unit disk. We also remark that our notation differs slightly from them. However, in the context of Volterra operators only, this notation should not confuse the reader. We have chosen it because it makes the notation $[V_{X,Y}:Y]$ that is given below more convenient.

From the definitions it follows that we always have $X\subseteq [g:Y]$ for $g \in V_{X,Y}$, and if, for instance $X=Y=H^p$ and $n=1$, the inclusion is known to be proper \cite{BDNS}. However, another interesting question is whether the intersection of all the optimal domains agrees with $X$. That is, whether $[V_{X,Y}:Y]=X$, where
$$[V_{X,Y}:Y]=\bigcap_{g \in V_{X,Y}} [g:Y].$$
We understand this problem in the following way. While $X$ and $Y$ uniquely define $V_{X,Y}$, when is it true that $Y$ and $V_{X,Y}$ uniquely define $X$? Or, in other words, can $X$ be recovered from the optimal domains of bounded Volterra operators $X\to Y$.

Before going further, let us offer a simple example. Let $X=A(\mathbb{B_n})$ be the ball algebra and $Y=\mathcal{B}^1$ be the standard Bloch space. It is easy to see that $V_{X,Y}=\mathcal{B}^1$, and that for every $g \in V_{X,Y}$ one has $H^\infty\subseteq [g:\mathcal{B}^1]$, so clearly $X \subsetneq [V_{X,Y}:Y]$ in that case. But one might argue that an example such as this one is tailor-made for the identity to fail. So what happens in the Hardy spaces? 

We will prove that the spaces $[g:H^q]$ are always complete in their respective topologies, and that if $T_g:H^p\to H^q$ is bounded, then $H^p$ is strictly contained in $[g:H^q]$. In \cite{BDNS} it is proven that
\begin{equation}\label{orig}
[BMOA:H^p]=H^p,\quad p\geq 1,
\end{equation}
when $n=1$.
In this work, we work in the setting of the unit ball and prove that if $p>q$ and $r=pq/(p-q)$, then
$$[H^r:H^q]=H^p,$$
which means that $H^p$ can be recovered from the optimal domains of bounded $T_g:H^p\to H^q$ for $p>q$. On the other hand, when $p<q$ and $\alpha=1+n/q-n/p$, we prove
$$H^p \subsetneq [\mathcal{B}^\alpha:H^q],$$
and moreover $[\mathcal{B}^\alpha:H^q]$ has a simple description as a tent space. We also establish the \eqref{orig} in for unit ball.

The results indicate, in particular, that $H^p$ cannot be recovered from optimal domains of bounded $T_g:H^p\to H^q$ for $p<q$. Other properties of optimal domains of this type were very recently considered in \cite{ABR2} for $n=1$.

In the next section, we focus on the case $p>q$, but we also prove some basic results on optimal domains such as the fact that $[g:H^q]$ is always a quasi-Banach space (Theorem \ref{complete}). For the full characterization of the intersection of optimal domains in the case $p>q$, which is Theorem \ref{compact}, we will also characterize the boundedness of $S_g$ between two Hardy spaces. This is key to studying optimal domains using the approach in \cite{BDNS}. The third section is devoted to the study of the case $p<q$, most notably Theorem \ref{tentdom} where we also characterize the intersection of the optimal domains, providing the necessary tent space definitions.

The fourth section deals with the case $p=q$, where the problem was originally studied. We verify that $H^p$ is always strictly contained in the corresponding optimal domain, and record as a consquence of this and the previous results that is always the case for a bounded $T_g:H^p\to H^q$ in Corollary \ref{always}. Finally, we show that $H^p$ can be recovered for the optimal domains of $T_g$ for every $p>0$, see Theorem \ref{same}. Perhaps surprisingly, this is the most difficult of the three main proofs, and that is why it is saved for the last. This proof, as well as the other two main proofs, makes use of the theory on tent spaces, together with Kahane and Khinchine inequalities and atomic decompositions.

As one might expect, we will need comparisons between different quantities. As is standard, by $A\lesssim B$ (or equivalently $B\gtrsim A$), we mean that there exists $C>0$ so that $A\leq CB$. If both $A \lesssim B$ and $B\gtrsim A$ are true, we write $A\asymp B$.

\section{The case $p>q$}

\noindent We will begin by proving that the optimal domains $[g:H^q]$ for a non-constant holomorphic $g$ are always quasi-Banach spaces, and Banach spaces, if $q\geq 1$. If $g$ is constant, then the optimal domain is trivially all of $H(\mathbb{B}_n)$, and the construction gives it the trivial topology, making this case meaningless for further analysis.

\begin{theorem}\label{complete}
Let $0<q<\infty$ and $g$ be holomorphic and non-constant. Then the optimal domain $[g:H^q]$ is a quasi-Banach space. It is further a Banach space if $q\geq 1$.
\end{theorem}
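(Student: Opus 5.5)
The plan is to show three things in turn: that $\|f\|_{[g:H^q]}=\|T_gf\|_{H^q}$ is a genuine quasinorm (in particular definite), that convergence in it implies locally uniform convergence, and then completeness.

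\emph{Quasinorm.} Homogeneity and the quasi-triangle inequality (with the constant of $H^q$, and the triangle inequality if $q\geq1$) follow from the linearity of $f\mapsto T_gf$ together with the corresponding properties of $\|\cdot\|_{H^q}$. For definiteness, suppose $T_gf=0$. Differentiating the defining integral gives $R(T_gf)=f\,Rg$, so $f\,Rg\equiv 0$ on $\mathbb{B}_n$. Since $g$ is non-constant, $Rg\not\equiv 0$; indeed, if $g=\sum_k g_k$ is the homogeneous expansion, then $Rg=\sum_k k g_k$. Because $H(\mathbb{B}_n)$ is an integral domain (the ball is connected), it follows that $f\equiv0$.

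\emph{Local control.} The key step is to show that a Cauchy sequence in $[g:H^q]$ converges uniformly on compact sets. If $\{f_j\}$ is Cauchy in $[g:H^q]$, then $F_j=T_gf_j$ is Cauchy in $H^q$, so $F_j\to F$ in $H^q$ by completeness of $H^q$. Convergence in $H^q$ implies locally uniform convergence, via the pointwise estimate $|h(z)|\lesssim \|h\|_{H^q}(1-|z|)^{-n/q}$. Since $F_j(0)=0$ for all $j$, we get $F(0)=0$. By Cauchy estimates, $RF_j=f_jRg\to RF$ locally uniformly as well. We must now recover $f_j$ from $f_jRg$, which is where the zero set $Z$ of $Rg$ causes trouble. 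Off $Z$ we have $f_j=RF_j/Rg\to RF/Rg$ locally uniformly on $\mathbb{B}_n\setminus Z$. To handle points of $Z$, fix a compact $K\subset\mathbb{B}_n$ and take a slightly larger closed ball $\overline{B}(0,\rho)$ containing $K$ in its interior. For each $z_0\in K$, choose a complex line $L$ through $z_0$ on which $Rg|_L\not\equiv0$; this is possible because $Z$ is a proper analytic subvariety. On $L$, pick a small circle around $z_0$ avoiding $Z$. The maximum principle on that disc, applied to the holomorphic function $f_j-f_k$, bounds $|f_j-f_k|(z_0)$ by its values on the circle, where it is controlled by $|RF_j-RF_k|/|Rg|$. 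A compactness and continuity argument makes the choice of circles uniform over a neighbourhood of each point of $K$, and finitely many such neighbourhoods cover $K$. Hence $\{f_j\}$ is uniformly Cauchy on $K$. So $f_j\to f$ locally uniformly for some $f\in H(\mathbb{B}_n)$. An alternative route is the Riemann removable singularity theorem: $RF/Rg$ is holomorphic off $Z$ and locally bounded (by the same disc argument), so it extends holomorphically across $Z$.

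\emph{Completeness.} With $f_j\to f$ locally uniformly, we have $T_gf_j\to T_gf$ pointwise, since the integral over $t\in[0,1]$ only sees the compact segment $\{tz: 0\le t\le 1\}$. Thus $T_gf=F\in H^q$, so $f\in[g:H^q]$, and $\|f_j-f\|_{[g:H^q]}=\|F_j-F\|_{H^q}\to0$. This gives a quasi-Banach space, and a Banach space when $q\ge1$. The main obstacle is the division step near the zeros of $Rg$. The plan is to handle it with the one-variable maximum principle on complex lines transversal to $Z$, which is the cleanest way to get uniform bounds.
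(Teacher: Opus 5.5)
Your proposal is correct and follows essentially the same route as the paper. Off $Z_{Rg}$ you control $f$ through the identity $R(T_gf)=f\,Rg$. At points of $Z_{Rg}$ you take a one-variable slice through the point on which $Rg$ stays away from zero on a small circle, and make the resulting bound uniform on a neighbourhood, so that Cauchy sequences converge locally uniformly. The differences are minor: you use the maximum modulus principle for $f_j-f_k$ on the slice disc where the paper invokes subharmonicity of $\log|h_\zeta|$, and you write out the definiteness of the quasinorm and the final limit step, which the paper delegates to the one-dimensional argument.
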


\begin{proof}
The map $f\mapsto T_gf$ is linear. Therefore it is clear that the topology in $[g:H^q]$ is given by a norm or a quasi-norm, whenever the topology of $H^q$ is. We will investigate the completeness of $[g:H^q]$. The argument is largely the same as in Theorem 1 of \cite{BDNS}.

Let $f \in [g:H^q]$. Let $Z_{Rg}=\{z\in \mathbb{B}_n: Rg(z)=0\}$. For each point $z \notin Z_{Rg}$, we have 
$$|f(z)|\leq C_z \|f\|_{[g:H^q]},$$
which follows for using the boundedness of the functional $h\mapsto Rh(z)$ on $H^q$. The positive constants $C_z$ depend on $1/|Rg(z)|$ and $q$ but vary continuously on $\mathbb{B}_n\setminus Z_{Rg}$.

Let now $z \in Z_{Rg}$. This case needs some extra effort, since the zeroes of holomorphic functions of several complex variables need not be isolated inside the domain. We proceed as follows. Since $g$ is not constant, there exists $\zeta \in \mathbb{C}^n$ with $|\zeta|=1$, and $\varepsilon>0$, so that the one variable analytic slice $h_\zeta(w)=Rg(z+w\zeta)$ satisfies $|h_\zeta(w)|>\varepsilon$ when $r<|w|<R$ for some $0<r<R$. We can use subharmonicity of $\log|h_\zeta|$ to arrive at the an upper bound for $|f(z)|$ similar to that in \cite{BDNS}. This upper bound can be chosen to be uniform in some neighborhood of $z$ (regardless if the new point belong to $Z_{Rg}$ or not).

For these observations, it follows that Cauchy sequences in $[g:H^q]$ converge to a holomorphic function uniformly on compact subsets. Now that this has been achieved, the rest of the argument in the cited paper carries over.
\end{proof}

Let $ 0<q<p<\infty $ and $ r= pq/(p-q)$. We show that if $T_g:H^p\to H^q$ is bounded, then $H^p$ is always strictly contained in the optimal domain $[g:H^q]$.

\begin{proposition}\label{pbig}
Let $ 0<q<p<\infty $, $g \in H^r$. Then $H^p\subsetneq [g:H^q]$.
\end{proposition}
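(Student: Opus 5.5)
The inclusion $H^p\subseteq [g:H^q]$ follows from the known characterization $V_{p,q}=H^r$ for $p>q$: $T_g:H^p\to H^q$ is bounded, so $T_gf\in H^q$ for every $f\in H^p$. The content of the statement is therefore strictness. For that I will produce $f\notin H^p$ with $T_gf\in H^q$. If $g$ is constant, then $T_g=0$ and $[g:H^q]=H(\mathbb{B}_n)\supsetneq H^p$, so I assume $g$ is nonconstant.

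The plan rests on the factorization $T_g(fh)$ versus $T_{g}$ acting on a product. The key identity is that $R(T_gf)=f\,Rg$. Hence $T_gf\in H^q$ is equivalent, via the area-function (Lusin) characterization of $H^q$ on the ball, to
\[
\int_{\mathbb{S}_n}\Big(\int_{\Gamma(\zeta)}|f(z)|^2|Rg(z)|^2(1-|z|^2)^{1-n}\,dv(z)\Big)^{q/2}d\sigma(\zeta)<\infty .
\]
For the sufficiency direction of the boundedness $T_g:H^p\to H^q$, one uses $f^*\in L^p$ and the area function of $g$ in $L^r$, together with Hölder's inequality with exponents $p/q$ and $r/q$. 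This suggests the following choice. Take $f=g^{\,r/p}\cdot u$ type products, or more simply exploit the room available in Hölder's inequality: I will choose $f\in H^{s}\setminus H^p$ for some $s<p$ and $g$ gaining integrability.

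Concretely, the cleanest route is to construct $f$ with $f^*$ not in $L^p$ but such that $f^*\cdot A(g)\in L^q$, where $A(g)$ is the admissible area function of $g$. Since $g\in H^r$ is only given, I cannot improve $A(g)$ globally. Instead I localize. Since $A(g)\in L^r$ is a.e. finite, there is a set $E\subset\mathbb{S}_n$ of positive measure and a constant $M$ with $A(g)\le M$ on $E$. Moreover, one can arrange that $A(g)$ remains bounded on a slightly larger "tent region" over a point of density. Then I take $f(z)=(1-\langle z,\eta\rangle)^{-\beta}$ with $\eta$ a density point of $E$, with $\beta=n/p$, so that $f\notin H^p$ but $f\in H^{s}$ for all $s<p$. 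This makes $f^*\in L^{p,\infty}$.

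The estimate then splits into two regions. Away from $\eta$, $f$ is bounded and contributes at most $\|g\|_{H^q}$-type terms. Near $\eta$, the bound $A(g)\lesssim M$ on a cone-neighborhood, combined with $f\in H^q$ (because $q<p$), should give the finiteness. The main obstacle is this localization. The area function of $g$ at $\zeta$ near $\eta$ involves $|Rg|$ throughout $\Gamma(\zeta)$, and pointwise boundedness of $A(g)$ on $E$ need not control the integral $\int_{\Gamma(\zeta)}|f|^2|Rg|^2(1-|z|^2)^{1-n}dv$ for $\zeta\notin E$.

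I would first try to handle this by replacing the single point $\eta$ with an averaged construction. Define
\[
f=\sum_k c_k(1-\langle z,\eta_k\rangle)^{-n/p}
\]
with the $\eta_k$ chosen in $\{A(g)\le M\}$, and use the tent-space duality estimate $\int A(f Rg)^q\lesssim \int (f^*)^q A_E(g)^q$ restricted to good cones. If this fails, the fallback is an abstract argument. If $[g:H^q]=H^p$, then by Theorem \ref{complete} and the closed graph/open mapping theorem the norms are equivalent, i.e. $\|f\|_{H^p}\lesssim\|T_gf\|_{H^q}$. This lower bound is then contradicted by testing on the normalized kernels $f_a(z)=(1-|a|^2)^{n/p}(1-\langle z,a\rangle)^{-2n/p}$, for which $\|T_gf_a\|_{H^q}\to0$ along a sequence $|a|\to1$. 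That limit holds because $\|T_g f_a\|_{H^q}\lesssim \big(\int |g-g(a)|^{r}\ldots\big)^{1/r}$-type local quantities, which tend to zero for $g\in H^r$ by absolute continuity of the integral (vanishing Carleson behaviour). I expect this fallback, proving $\|T_gf_a\|_{H^q}\to 0$, to be the decisive and most technical step.
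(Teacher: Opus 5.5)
Your fallback is the paper's proof. For nonconstant $g$ the inclusion $H^p\hookrightarrow[g:H^q]$ is continuous, so equality, together with completeness (Theorem \ref{complete}) and the open mapping theorem for F-spaces, would make $T_g:H^p\to H^q$ bounded below. Your separate treatment of constant $g$ is correct, since Theorem \ref{complete} needs $g$ nonconstant.

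The paper then finishes in one line. It quotes Pau's result that for $p>q$ a bounded $T_g:H^p\to H^q$ is automatically compact, and a compact operator on an infinite-dimensional quasi-Banach space cannot be bounded below. So the step you flag as decisive needs no new work: $\|T_gf_a\|_{H^q}\to0$ on normalized kernels is that compactness in disguise.

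If you do want it by hand, the mechanism is not an oscillation $|g-g(a)|$. It is the local $L^r$-mass of the area function $A(g)(\zeta)=\big(\int_{\Gamma(\zeta)}|Rg|^2\,dV_{1-n}\big)^{1/2}$. With $f^*(\zeta)=\sup_{\Gamma(\zeta)}|f|$ one has $\|T_gf\|_{H^q}^q\lesssim\int_{\mathbb{S}_n}(f^*)^qA(g)^q\,d\sigma$. Split the sphere at the cap $Q_\delta=\{\zeta:|1-\langle\zeta,a/|a|\rangle|<\delta\}$:
\begin{itemize}
\item Off $Q_\delta$ you have $f_a^*\lesssim(1-|a|)^{n/p}\delta^{-2n/p}\to0$ as $|a|\to1$, and $A(g)\in L^r\subset L^q$, so this part vanishes.
\item On $Q_\delta$, Hölder with $q/p+q/r=1$ gives the bound $\|f_a\|_{H^p}^q\big(\int_{Q_\delta}A(g)^r\,d\sigma\big)^{q/r}$. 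This is small uniformly in the centre by absolute continuity, since $\sigma(Q_\delta)\asymp\delta^n$.
\end{itemize}

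Your first, constructive plan also goes through. The obstacle you raise disappears once $\eta$ is chosen by a maximal function rather than as a density point of a set where $A(g)$ is bounded. Take $\eta$ with $\mathcal{M}(A(g)^r)(\eta)<\infty$, where $\mathcal{M}$ is the Hardy--Littlewood maximal operator for nonisotropic caps; this holds for a.e.\ $\eta$. Let $f(z)=(1-\langle z,\eta\rangle)^{-n/p}$, which is not in $H^p$. Then $f^*(\zeta)\lesssim|1-\langle\zeta,\eta\rangle|^{-n/p}$. Using dyadic caps $Q_j$ about $\eta$ with $\sigma(Q_j)\asymp2^{-jn}$, Hölder gives
$$\int_{\mathbb{S}_n}(f^*)^qA(g)^q\,d\sigma\lesssim\sum_j2^{jnq/p}\sigma(Q_j)^{1-q/r}\Big(\int_{Q_j}A(g)^r\,d\sigma\Big)^{q/r}\lesssim\mathcal{M}(A(g)^r)(\eta)^{q/r}\sum_j2^{-jnq/r}<\infty,$$
using $1-q/p=q/r$. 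Only averages of $A(g)^r$ over caps centred at $\eta$ enter, so no pointwise control for $\zeta$ outside a good set is needed.

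This route exhibits an explicit element of $[g:H^q]\setminus H^p$ and uses neither Theorem \ref{complete} nor the open mapping theorem. The paper's route is shorter but rests on the compactness theorem.
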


\begin{proof}
One can argue as in \cite{BDNS}. Indeed, for any $g \in H^r$, it is trivially true that $H^p$ is contained in $[g:H^q]$. If there were a $g \in H^r$ with $H^p=[g:H^q]$, the identity map would be bounded and onto $H^p\to [g:H^q]$. The open mapping theorem (which holds also for F-spaces \cite{Rud}) then implies that $T_g:H^p\to H^q$ is bounded below.

Since $H^p$ is a subspace of $[g:H^q]$ and embeds there boundedly, if there was a $g \in H^r$ with $H^p=[g:H^q]$, one would have that $T_g:H^p\to H^q$ is bounded below. It is well-known \cite{Pau} that for this range of $p$ and $q$, $T_g:H^p\to H^q$ is bounded if and only if it is compact. A compact operator cannot be bounded below.
\end{proof}

We will now proceed with the proofs of the results mentioned in the Introduction.

\begin{theorem}\label{compact}
	Let $ 0<q<p<\infty $ and $ r= pq/(p-q)$. Then $ H^p = [H^r:H^q]$.
\end{theorem}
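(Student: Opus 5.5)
The inclusion $H^p\subseteq [H^r:H^q]$ is immediate from $V_{p,q}=H^r$. For the reverse inclusion, fix $f\in[H^r:H^q]$, so that $T_gf\in H^q$ for every $g\in H^r$. The goal is to show $f\in H^p$, following the approach of \cite{BDNS}, which goes through the companion operator $S_g$.

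\textbf{Step 1: a uniform bound.} Consider the linear map $\Phi_f:H^r\to H^q$, $g\mapsto T_gf$.
\begin{itemize}
\item Its graph is closed: if $g_k\to g$ in $H^r$ and $T_{g_k}f\to h$ in $H^q$, then $Rg_k\to Rg$ locally uniformly. Hence $T_{g_k}f\to T_gf$ pointwise, and so $h=T_gf$.
\item Since $H^r$ and $H^q$ are F-spaces, the closed graph theorem \cite{Rud} makes $\Phi_f$ bounded.
\end{itemize}
This gives
$$\|T_gf\|_{H^q}\lesssim \|g\|_{H^r},\qquad g\in H^r.$$

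\textbf{Step 2: pass to the companion operator.} Use the integration by parts identity
$$f g = f(0)g(0)+T_gf+S_gf,$$
so that $S_gf=fg-f(0)g(0)-T_gf$. The difficulty is that $fg$ is not yet known to lie in $H^q$. Work instead with the dilations $f_s(z)=f(sz)$, $s<1$, which lie in $H^p$.
\begin{itemize}
\item Since $T_g(f_s)=(T_{g_{1/s}}f)_s$, one would rather compute directly with $f_s$, treating the roles of $f$ and $g$ symmetrically.
\item Better: by the symmetry $T_gf=S_fg$, Step 1 says precisely that the companion operator $S_f:H^r\to H^q$ is bounded.
\end{itemize}
So the key step is to characterize boundedness of $S_f:H^r\to H^q$ for $q<r$, and show that it forces $f\in H^p$, with $1/p=1/q-1/r$. (The introduction hints that this characterization is proved in the paper.)

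\textbf{Step 3: $S_f$ bounded implies $f\in H^p$.} This is the main obstacle. The plan uses tent spaces and area functions. Write $\Gamma(\zeta)$ for the Korányi approach region at $\zeta\in\mathbb{S}_n$, and $d\lambda_n(z)=(1-|z|^2)^{-n-1}dV(z)$.
\begin{enumerate}
\item By Calderón's area theorem, $\|S_fg\|_{H^q}$ is comparable (modulo the value at $0$) to the $L^q(\sigma)$-norm of
$$A(S_fg)(\zeta)=\left(\int_{\Gamma(\zeta)}|Rg|^2|f|^2(1-|z|^2)^{1-n}dV\right)^{1/2}.$$
\item Thus $\|S_fg\|_{H^q}\asymp \|A(f\,Rg)\|_{L^q}$, in the tent-space form.
\item Test on $g$ built with Rademacher sums over an atomic decomposition of $H^r$, e.g. $g=\sum_k r_k(t)\lambda_k (1-|a_k|^2)^{b}/(1-\langle z,a_k\rangle)^{b+n/r}$, where $\{a_k\}$ is a separated lattice and $\{\lambda_k\}$ has $\sum_k\lambda_k^r\chi_{\Gamma(a_k)}$-type control in $L^{r/2}$. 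Average over $t$ using Khinchine and Kahane.
\item This yields that the measure $|f|^2(1-|z|^2)^{1-n}dV$ tested against tent-space sequences satisfies
$$\Big\|\Big(\sum_k|\lambda_k|^2\sup_{D(a_k)}|f|^2\chi_{\Gamma(a_k)}\Big)^{1/2}\Big\|_{L^q}\lesssim \|\{\lambda_k\}\|_{T^r_2}.$$
\item Tent-space factorization/duality ($T^q_2=T^p_\infty\cdot T^r_2$, as in Pau's $p>q$ proof for $T_g$) then gives that the nontangential maximal function of $f$ (its discrete version on the lattice) lies in $L^p$.
\item Hence $f\in H^p$.
\end{enumerate}

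Justifying the factorization step for all $0<q<\infty$, including $q<1$, and controlling $\sup_{D(a_k)}|f|$ by subharmonicity, is where the real work lies. An alternative I would try first is to dualize directly: choose $g$ adapted to $f_s$, roughly $g=|f_s|^{p/r}$ made holomorphic via an outer-type or inner–outer substitute in the ball. Since outer factorization fails for $n>1$, I expect the Rademacher/tent-space route to be necessary.
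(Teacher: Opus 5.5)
Your proposal is correct and follows the paper's route. The closed graph theorem makes $S_f:H^r\to H^q$ (that is, $g\mapsto T_gf$) bounded, and $f\in H^p$ then follows from the characterization of bounded companion operators (Theorem~\ref{p>q} with the roles of $p$ and $r$ interchanged), which the paper proves exactly as you outline: Rademacher sums of lattice atoms, Kahane--Khinchine, subharmonicity, and a duality/factorization of tent spaces after raising to the power $1/s$, namely $T^{(ps)'}_1=T^{rs}_{2s}\cdot T^{(qs)'}_{(2s)'}$, which handles small exponents. The only slip is one of normalization: if the coefficients range over $T^r_2$, the atoms should be $\bigl((1-|a_k|^2)/(1-\langle z,a_k\rangle)\bigr)^b$ with $b>n\max(1,2/r)$, not the version with denominator exponent $b+n/r$.
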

\begin{proof}
We follow the reasoning in \cite{BDNS}. Let $ g\in V_{p,q} = H^r$, and $f\in  [H^r: H^q]$. Now, $  T_g f $ belongs to $H^q$, or equivalently, $S_f  g \in H^q$.  This means by the closed graph theorem (for F-spaces if necessary; see \cite{Rud}) that $S_f:H^r\to H^q$ is bounded. The claim now follows from the next theorem.
\end{proof}	

We will formulate the following theorem in the standard form. That is, for $S_g:H^p\to H^q$. Note that if $p>q$, the roles of $p$ and $r$ are symmetric, so we will be able to finish the proof once this result is obtained.
	
\begin{theorem}\label{p>q}
Let $ 0<q<p<\infty $, $ r= pq/(p-q)$ and $g \in H(\mathbb{B}_n)$. Then $S_g: H^p \to H^q$  is bounded if and only if $ g\in H^r $. Moreover, if $g(0)=0$, we have
$$\|S_g\|_{H^p\to H^q}\asymp \|g\|_{H^r}.$$
\end{theorem}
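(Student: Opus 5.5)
Sufficiency goes through area functions and Hölder, necessity through a Kahane–Khinchine randomization that converts the square function into an $H^q$ norm. The basic identity is that, since $R(S_gf)=g\,Rf$ and $S_gf(0)=0$, the Calderón-type area-function characterization of $H^q$ on the ball gives
$$\|S_gf\|_{H^q}^q\asymp\int_{\mathbb{S}_n}\Big(\int_{\Gamma(\zeta)}|g(z)|^2|Rf(z)|^2(1-|z|^2)^{1-n}\,dv(z)\Big)^{q/2}d\sigma(\zeta),$$
where $\Gamma(\zeta)$ is a Korányi admissible region.

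\textbf{Sufficiency.} I would bound $|g(z)|\le g^*(\zeta)$ for $z\in\Gamma(\zeta)$, with $g^*$ the admissible maximal function. Then
$$\|S_gf\|_{H^q}^q\lesssim\int_{\mathbb{S}_n} (g^*)^q\,(A f)^q\,d\sigma ,$$
where $Af$ is the area function of $f$. Hölder with exponents $r/q$ and $p/q$ gives $\|g^*\|_{L^r}^q\|Af\|_{L^p}^q\lesssim\|g\|_{H^r}^q\|f\|_{H^p}^q$. The last step uses the maximal theorem for $H^r$ and the area-function characterization of $H^p$.

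\textbf{Necessity.} This is the main obstacle. Assume $S_g$ is bounded. By the estimate above, $\|S_g\|\lesssim\|g\|_{H^r}$ is already known, so the task is the reverse inequality (after normalizing, $g(0)=0$ is harmless since constants only add the identity minus evaluation at $0$). I would first reduce to dilations $g_s(z)=g(sz)$, which lie in $H^r$. Using the rotation/dilation invariance of the area integral (subharmonicity), one gets $\|S_{g_s}\|\lesssim\|S_g\|$. It then suffices to prove $\|g_s\|_{H^r}\lesssim\|S_{g_s}\|$ with constants independent of $s$.

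For the randomization, take an $r$-lattice $\{a_k\}$ and test functions $f_t=\sum_k c_k\,\epsilon_k(t)\,\frac{(1-|a_k|^2)^{b-n/p}}{(1-\langle z,a_k\rangle)^{b}}$ with Rademacher signs $\epsilon_k$ and $b$ large. By the atomic/tent-space theory, $\|f_t\|_{H^p}\lesssim\|\{c_k\}\|_{T^p_2}$, the discrete tent-space norm. Applying $S_g$, averaging over $t$, and using Kahane's and Khinchine's inequalities inside the area integral, I obtain
$$\int_{\mathbb{S}_n}\Big(\sum_{a_k\in\Gamma(\zeta)}|c_k|^2|g(a_k)|^2\Big)^{q/2}d\sigma\lesssim\|S_g\|^q\,\|\{c_k\}\|^q_{T^p_2}.$$
Here I localize $|g\,Rf|^2$ near $a_k$ via subharmonicity, and keep only the diagonal term, which is legitimate because the lattice is separated and we only need a lower bound. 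Dualizing this multiplier inequality on discrete tent spaces, via the factorization $T^q_2=T^p_2\cdot T^r_\infty$ (Cohn–Verbitsky type), shows that $\{g(a_k)\}$ belongs to $T^r_\infty$. Concretely, $\zeta\mapsto\sup_{a_k\in\Gamma(\zeta)}|g(a_k)|$ is in $L^r$ with norm $\lesssim\|S_g\|$.

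\textbf{Conclusion and expected sticking point.} A discretized nontangential maximal function in $L^r$ controls $\|g\|_{H^r}$, possibly after enlarging apertures, which only changes constants. This gives $\|g\|_{H^r}\lesssim\|S_g\|$. The factorization/duality step for $0<q<1$ is where I expect care is needed. There I would instead use the strategy of Pau's $T_g$ proof, a "diagonal" Hölder trick choosing $c_k$ adapted to $g$ itself.
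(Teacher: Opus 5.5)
Your sufficiency proof and the first half of your necessity proof coincide with the paper's. That first half consists of the randomized kernel test functions with $T^p_2$ coefficients, Kahane and Khinchine, and localization to lattice balls. The gap is in the step that should turn the discrete inequality $\int_{\mathbb{S}_n}(\sum_{a_k\in\Gamma(\zeta)}|\lambda_k|^2\nu_k^2)^{q/2}d\sigma\lesssim\|S_g\|^q\|\lambda\|_{T^p_2}^q$ into $\{\nu_k\}\in T^r_\infty$.

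The factorization you invoke, $T^q_2=T^p_2\cdot T^r_\infty$, cannot do this. Its easy half, $T^p_2\cdot T^r_\infty\subseteq T^q_2$, is just the discrete form of your sufficiency argument. Its nontrivial half says that every element of the \emph{target} factors, which tells you nothing about which $\nu$ multiply $T^p_2$ into $T^q_2$. A duality proof has to factor the \emph{predual} of $T^r_\infty$ instead. For $q>1$ one writes $\mu\in T^{r'}_1$ as $\mu=\lambda\tau$ with $\lambda\in T^p_2$ and $\tau\in T^{q'}_2$, and uses $(T^q_2)^*=T^{q'}_2$.

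For $q<1$ no such duality is available, and your fallback does not close. The natural diagonal choice is $\lambda_k=\nu_k^{r/p}$. Since $1+r/p=r/q$, it yields only $\|\nu\|^r_{T^r_{2r/q}}\lesssim\|S_g\|^q\|\nu\|^{rq/p}_{T^r_{2r/p}}$. The inner indices on the two sides differ, with $\|\nu\|_{T^r_{2r/p}}\ge\|\nu\|_{T^r_{2r/q}}$, so nothing can be absorbed. Moreover, neither index is the sup-index defining $T^r_\infty$. The inner index $2$ is imposed by Khinchine, so choosing coefficients adapted to $g$ does not directly produce a nontangential supremum.

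The missing idea, which is what the paper does and which works uniformly for $0<q<p<\infty$, is to pass to a root first. Take $s$ with $2s>1$ and $qs>1$. Then $rs>1$, so $T^{rs}_\infty=(T^{(rs)'}_1)^*$, and you aim for $\{\nu_k^{1/s}\}\in T^{rs}_\infty$.
\begin{itemize}
\item A positive $\mu\in T^{(rs)'}_1$ factors as $\mu=\tau\lambda^{1/s}$ with $\lambda^{1/s}\in T^{ps}_{2s}$ and $\tau\in T^{(qs)'}_{(2s)'}$. Here $\lambda\in T^p_2$ with $\|\lambda^{1/s}\|_{T^{ps}_{2s}}=\|\lambda\|_{T^p_2}^{1/s}$, and the exponents match because $1/(rs)'=1/(ps)+1/(qs)'$.
\item Apply H\"older inside the cone with exponents $(2s)',2s$, then on the sphere with $(qs)',qs$, and then the discrete inequality. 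This gives $\sum_k\mu_k\nu_k^{1/s}(1-|a_k|^2)^n\lesssim\|S_g\|^{1/s}\|\mu\|_{T^{(rs)'}_1}$, hence $\|\nu\|_{T^r_\infty}\lesssim\|S_g\|$.
\end{itemize}

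Two smaller points also need repair.
\begin{itemize}
\item With $T^p_2$ coefficients the atoms must carry $(1-|a_k|^2)^b$, not $(1-|a_k|^2)^{b-n/p}$. A single atom of your form has $H^p$ norm $\asymp1$ but $T^p_2$ norm $\asymp(1-|a_k|^2)^{n/p}$.
\item $R$ applied to the kernel produces a factor $\langle z,a_k\rangle$, so what you actually control is $\nu_k=|a_k|^2|g(a_k)|$ rather than $|g(a_k)|$. The paper handles this by applying the lattice lemma (Lemma 3 of MPPW) to the holomorphic functions $z_i^2g$. That lemma requires the lattice parameter $\rho$ to be small, not merely a change of aperture.
\end{itemize}
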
	
\begin{proof}

We note that this result is probably known to experts, at least when $n=1$. Since we have been unable to find its proof in the literature, one is given here.

We first consider the sufficiency. In fact, it follows easily from the corresponding results for $T_g$ and the multiplication operator $M_g$. We provide a direct proof, similar to one for $T_g$ in \cite{Pau}, by methods that will be used for the necessity. For $\gamma>1$ and $|\zeta|=1$, we denote by $\Gamma(\zeta)=\Gamma_{\gamma}(\zeta)$ the approach region
$$\Gamma_{\gamma}(\zeta)=\left\lbrace z \in \mathbb{B}_n:|1-\langle z,\zeta\rangle|<\frac{\gamma}{2}(1-|z|^2)\right\rbrace.$$

For $\beta \in \mathbb{R}$ we write $dV_\beta(z)=(1-|z|^2)^\beta dV(z)$, where $V$ is the normalized $2n$-dimensional Lebesgue measure on $\mathbb{B}_n$. Now assume $g\in H^r $ with $ r= pq/(p-q)$. If $f\in H^p  $, then by the area function description of Hardy spaces \cite{AB, Pau} we have
\begin{align*}
	\|S_g f\|^q_{H^q} &\asymp \int_{\mathbb{S}_n} \left(\int_{\Gamma(\zeta)} |Rf(z) g(z) |^2 dV_{1-n}(z) \right)^{q/2} d\sigma(\zeta)\\
	&\lesssim \int_{\mathbb{S}_n}\left( \sup_{z\in\Gamma(\zeta)} |g(z)|\right)^q \left(\int_{\Gamma(\zeta)} |Rf(z)|^2 dV_{1-n}(z)\right)^{q/2}d\sigma(\zeta).
\end{align*}
We use Hölder's inequality and get
\begin{align*}
	\|S_g f\|^q_{H^q} \lesssim &\left( \int_{\mathbb{S}_n}\left( \sup_{z\in\Gamma(\zeta)} |g(z)|\right)^{pq/(p-q)} d\sigma(\zeta)\right)^{1-q/p} \\&\times\left(\int_{\mathbb{S}_n}\left(\int_{\Gamma(\zeta)} |Rf(z)|^2 dV_{1-n}(z)\right)^{p/2}d\sigma(\zeta)\right)^{q/p}.
\end{align*}
Equivalently, this means
$$\|S_g f\|^q_{H^q} \lesssim \|g\|^q_{H^r} \|f\|^q_{H^p}, $$ 

giving the sufficiency.\\

For the necessity, we proceed with the argument from \cite{MPPW}, based on the Kahane and Khinchine inequalities \cite{Lue}, atomic decomposition \cite{CR, Pau}, area function description of Hardy spaces \cite{AB, Pau}, and factorization of tent spaces \cite{CV, CMS, MPPW}. The proof follows the outline of either Theorem 7 or Theorem 8 of \cite{MPPW}, except that it is simpler. The proof is sketchy in the sense that we do not discuss the preliminaries on tent spaces is this paper; an interested reader can take a look at \cite{MPPW}.

Let $Z=\{a_k\}\subseteq \mathbb{B}_n$ be a $\rho$-lattice in the Bergman metric. The most of the argument below works for any $\rho>0$, but in the very last stage we will assume that $\rho$ is very small. Let $b>n\max(1,2/p)$. We consider a family of test functions of the form 
$$ f_t(z) = \sum_k \lambda_k r_k(t) \left(\frac{1-|a_k|^2}{1-\langle z,a_k\rangle} \right)^b,$$

where the sequence $ \{\lambda_k\} $ belongs to $ T_2^p$, a tent space of sequences indexed by the lattice, and $r_k : [0,1]\to \{-1 , +1\}  $ are the Rademacher functions. By the area function description of the Hardy spaces, integrating with respect to t, where $ t\in [0,1], $ and using Fubini's theorem, we have 
\begin{align*}
	\int_{\mathbb{S}_n} \int_{0}^{1} &\left( \int_{\Gamma(\zeta)} \left|g(z) \sum_{k} \lambda_k r_k(t) |\langle z,a_k\rangle|\dfrac{(1-|a_k|^2)^b}{(1-\langle a_k,z\rangle)^{b+1}} \right|^2 dV_{1-n}(z)\right)^{q/2} dt \,d\sigma(\zeta)\\
	&\lesssim \|S_g\|^q \,\|\lambda\|^q_{T^p_2}.
\end{align*}
Now, using Kahane's inequality and Fubini's theorem this gives
\begin{align*}
	\int_{\mathbb{S}_n} &\left(  \int_{\Gamma(\zeta)}\int_{0}^{1} \left|g(z)  \langle z,a_k\rangle \sum_{k} \lambda_k r_k(t) \dfrac{(1-|a_k|^2)^b}{(1-\langle z,a_k\rangle)^{b+1}} \right|^2 dt \,dV_{1-n}(z)\right)^{q/2}  \,d\sigma(\zeta)\\
	&\lesssim \|S_g\|^q \,\|\lambda\|^q_{T^p_2}.
\end{align*}
We use Khinchine's inequality to obtain
\begin{align*}
	\int_{\mathbb{S}_n} &\left(  \int_{\Gamma(\zeta)} \sum_{k}   |\lambda_k|^2 |\langle z,a_k\rangle|^2 |g(z)|^2  \dfrac{(1-|a_k|^2)^{2b}}{|1-\langle z,a_k\rangle|^{2b+2}}   \,dV_{1-n}(z)\right)^{q/2}  \,d\sigma(\zeta)\\
	&\lesssim \|S_g\|^q \,\|\lambda\|^q_{T^p_2}.
\end{align*}
Replacing the integral over $ \Gamma(\zeta) $ by an integral over Bergman metric balls $B(a_k, \rho)$, and switching to an approach region with a smaller aperture, also denoted by $\Gamma(\zeta)$,
\begin{align*}
	\int_{\mathbb{S}_n} &\left( \sum_{{a_k}\in \Gamma(\zeta)}    |\lambda_k|^2 \int_{B(a_k, r)}|\langle z,a_k\rangle |^2 |g(z)|^2  \dfrac{(1-|a_k|^2)^{2b}}{|1-\langle z,a_k\rangle|^{2b+2}}   \,dV_{1-n}(z)\right)^{q/2}  \,d\sigma(\zeta)\\
	&\lesssim \|S_g\|^q \,\|\lambda\|^q_{T^p_2}.
\end{align*}
We proceed by using subharmonicity and $|1-\langle z,a_k\rangle| = 1-|a_k|^2 $ for every $ z\in B(a_k, r)$, arriving at the estimate 
\begin{equation}\label{equ1}
	\int_{\mathbb{S}_n} \left( \sum_{{a_k}\in \Gamma(\zeta)}    |\lambda_k|^2 |a_k|^4 |g(a_k)|^2  \right)^{q/2}  \,d\sigma(\zeta) \lesssim \|S_g\|^q \,\|\lambda\|^q_{T^p_2}.
\end{equation}
Denoting $\nu_k = |a_k|^2 |g(a_k)|,$  we want to prove that $ \{\nu_k\} \in T^r_\infty$. This would lead to a discrete way to express that $g\in H^r$.    

Let $s>0$ be big enough so that $2s>1$ and $qs>1$ hold. We are done if we can prove that $\{\nu_k^{1/s}\}\in T^{rs}_\infty$. By duality of tent spaces, since $rs>1$, this can be shown if we prove
$$\left|\sum_k \mu_k \nu_k^{1/s}(1-|a_k|^2)^n\right|\lesssim \|\mu\|_{T^{(rs)'}_1}.$$

To this end, we assume that $\lambda_k$ and $\tau_k$ are positive. Indeed, since $\nu_k$ are positive, the above dual pairing is clearly at its biggest for a positive sequence $\{\mu_k\}$, and the sign change on individual elements does not affect the norm of the corresponding tent space. We get
$$\sum_k \tau_k \lambda_k^{1/s} \nu_k^{1/s}  ( 1- |a_k|^2)^n \asymp  	\int_{\mathbb{S}_n} \left( \sum_{{a_k}\in \Gamma(\zeta)}    \tau_k \lambda_k^{1/s} \nu_k^{1/s}  \right)  \,d\sigma(\zeta). $$
Next, using Hölder's inequality, yields
\begin{align*} &\sum_k \tau_k \lambda_k^{1/s} \nu_k^{1/s}  ( 1- |a_k|^2)^n \\
\lesssim & 	\int_{\mathbb{S}_n} \left( \sum_{{a_k}\in \Gamma(\zeta)}    \tau_k^{2s/(2s-1)} \right)^{(2s-1)/2s} \left(\sum_{{a_k}\in \Gamma(\zeta)}  \lambda_k^2 \nu_k^2 \right)^{1/(2s)} \,d\sigma(\zeta).
\end{align*}
Consider $ 2s/(2s-1) = (2s)' $ and doing another Hölder's inequality for $qs$ and $qs/(qs-1) = (qs)'$ gives us now that the expression is dominated by

$$ \left(	\int_{\mathbb{S}_n} \left( \sum_{{a_k}\in \Gamma(\zeta)}    \tau_k^{\frac{2s}{2s-1}} \right)^{\frac{(qs)'}{(2s)'}}\,d\sigma(\zeta)\right)^{\frac{1}{(qs)'}}
\left( \int_{\mathbb{S}_n} \left(\sum_{{a_k}\in \Gamma(\zeta)}  \lambda_k^2 \nu_k^2 \right)^{\frac{q}{2}} \,d\sigma(\zeta)\right)^{\frac{1}{qs}}.$$

The first part is the norm $\|\tau\|_{T^{(qs)'}_{(2s)'}}.$ By using \ref{equ1} for the second part, to get the upper bound $\|S_g\|^{1/s} \|\lambda\|^{1/s}_{T^p_2} = \|S_g\|^{1/s} \|\lambda\|_{T^{ps}_{2s}}$. Altogether we have
$$ \sum_k \tau_k \lambda_k^{1/s} \nu_k^{1/s}  ( 1- |a_k|^2)^n  \lesssim \|S_g\|^{1/s} \|\lambda^{1/s}\|_{T^{ps}_{2s}} \|\tau\|_{T^{(qs)'}_{(2s)'}}.$$

By factorization of tent spaces (essentially due to Cohn and Verbitsky \cite{CV}, but the present variant is taken from \cite{MPPW}) and considering $\mu_k =\tau_k \lambda_k^{1/s}$ as an arbitrary positive element of the space $T^{(rs)'}_1,$ using duality we have $\{\nu_k^{1/s}\} \in T^{rs}_\infty$. By the discussion and calculation above, this leads to $\|\nu\|_{T^r_\infty} \lesssim \|S_g\|,$ which is what we wanted to prove. 

Now, choosing $\rho>0$ small enough and invoking Lemma 3 of \cite{MPPW}, we get
$$\int_{\mathbb{S}_n}\left(\sup_{z\in \Gamma(\zeta)}|z_i|^2 |g(z)|\right)^rd\sigma(\zeta)<\infty$$

for every $i\in \{1,2,...,n\}$ (since $z_i^2g(z)$ is holomorphic), yielding that 

\begin{equation}\label{Tr}\int_{\mathbb{S}_n}\left(\sup_{z\in \Gamma(\zeta)}|z|^2 |g(z)|\right)^rd\sigma(\zeta)<\infty.
\end{equation}

By the admissible maximal function description of Hardy spaces, we see that $H^r$ and the tent space of holomorphic functions defined by \eqref{Tr} are in bijective correspondence. By the Bounded Inverse Theorem for F-spaces, we get that $g \in H^r$ with its norm dominated by the operator norm of $S_g$.

\end{proof}

\section{The case $p<q$}

\noindent If $0<p<q<\infty$ and $T_g:H^p\to H^q$ is compact, then Proposition 3.19 of \cite{ABR2} states that $H^p\subsetneq [g:H^q]$. In fact, the result in the cited paper is formulated on the unit disk, but extends to several complex variables without extra efforts. In this section we will show that the same is true for the intersection of the optimal domains, even when taken over all bounded $T_g$. Morever, we completely describe this intersection.

As an appetiser, we will present a short proof that the intersection of the optimal domains is always larger than $H^p$, when $p<q$. We will need to introduce the standard weighted Bergman spaces. Let $\beta>-1$ and $0<p<\infty$. The weighted Bergman space $A^p_\beta$ consists of $f \in H(\mathbb{B}_n)$ with
$$\|f\|_{A^p_\beta}=\left(\int_{\mathbb{B}_n}|f(z)|^p dV_\beta(z)\right)^{1/p}<\infty.$$
See \cite{ZhuHol, Zhu} for some basic theory of $A^p_\beta$. We will need the following inclusion between Hardy and Bergman spaces for $p<s$.

\begin{equation}\label{HpAs}
H^p\subsetneq A^s_{n\frac{s}{p}-n-1}.
\end{equation}

In the unit disk this result goes back at least to 1932 paper of Hardy and Littlewood \cite{HL}. It is also a special case of Duren's theorem \cite{Dur1}. It is moreover known that the spaces $A^s_{n\frac{s}{p}-n-1}$ increase in size as $s$ increases. A simple proof of \eqref{HpAs} can be found in \cite{ZhuHol} (see Theorem 4.48).

\begin{proposition}\label{berg}
Let $ 0<p<q<\infty $ and $\alpha=1+n/q-n/p$. Then one has $ H^p \subsetneq [\mathcal{B}^\alpha: H^q]$.
\end{proposition}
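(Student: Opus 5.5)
The plan is to show directly that there is one function $f\notin H^p$ with $T_gf\in H^q$ for every $g\in\mathcal{B}^\alpha$. The inclusion $H^p\subseteq[\mathcal{B}^\alpha:H^q]$ is immediate, since $V_{p,q}=\mathcal{B}^\alpha$. If $\alpha\le 0$, then $\mathcal{B}^\alpha$ consists of constants, so $T_g=0$ and $[\mathcal{B}^\alpha:H^q]=H(\mathbb{B}_n)\supsetneq H^p$. We may therefore assume $0<\alpha<1$; the bound $\alpha<1$ holds because $p<q$.

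\emph{Step 1 (a $g$-free sufficient condition).} We have $R(T_gf)=f\,Rg$ and $|Rg(z)|\le\|g\|_{\mathcal{B}^\alpha}(1-|z|^2)^{-\alpha}$. The area function description of $H^q$ used in the proof of Theorem \ref{p>q} then gives
$$\|T_gf\|_{H^q}^q\lesssim \|g\|_{\mathcal{B}^\alpha}^q\int_{\mathbb{S}_n}\left(\int_{\Gamma(\zeta)}|f(z)|^2(1-|z|^2)^{1-n-2\alpha}\,dV(z)\right)^{q/2}d\sigma(\zeta).$$
So it suffices to find $f\notin H^p$ for which the right-hand tent-type quantity, call it $N(f)$, is finite.

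\emph{Step 2 (test function).} Take
$$f(z)=(1-z_1)^{-n/p}\left(\log\frac{e}{1-z_1}\right)^{-c},\qquad \frac1q<c\le\frac1p,$$
which is possible since $p<q$. Standard integration on the sphere uses $\sigma(\{\zeta:|1-\zeta_1|<\varepsilon\})\asymp\varepsilon^n$. With this, $\|f\|_{H^p}^p$ behaves like $\int_0 \delta^{-n}(\log\frac e\delta)^{-cp}\,\delta^{n-1}d\delta=\infty$ because $cp\le1$, so $f\notin H^p$. (Monotone convergence over $s\zeta$ handles the supremum over $s$.)

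\emph{Step 3 (estimating $N(f)$; the main computation).} Fix $\zeta$ and write $\delta=|1-\zeta_1|$. A point $z\in\Gamma(\zeta)$ with $1-|z|\asymp t$ satisfies $|1-z_1|\gtrsim \delta+t$. The slice of $\Gamma(\zeta)$ at height $t$ has volume $\asymp t^{n}\,dt$: a normal disc of area $t^2$ times $n-1$ tangential directions of size $\sqrt t$, divided by $dt$. Hence the inner integral is bounded by
$$\int_0^1 t^{1-2\alpha}(\delta+t)^{-2n/p}\left(\log\tfrac{e}{\delta+t}\right)^{-2c}dt\lesssim \delta^{2-2\alpha-2n/p}\left(\log\tfrac e\delta\right)^{-2c}.$$
This uses $1-2\alpha>-1$ and $2n/p>2-2\alpha$, which follows from $\alpha>1-n/p$, i.e.\ $n/q>0$. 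Since $2-2\alpha-2n/p=-2n/q$, this gives
$$N(f)\lesssim\int_0\delta^{-n}\left(\log\tfrac e\delta\right)^{-cq}\delta^{n-1}\,d\delta<\infty,$$
because $cq>1$. Consequently $T_gf\in H^q$ for every $g\in\mathcal{B}^\alpha$, and so $f\in[\mathcal{B}^\alpha:H^q]\setminus H^p$.

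The main obstacle is Step 3. The power exponent is exactly critical: the pure power $\gamma=n/p$ sits on the borderline of both conditions. The argument therefore hinges on getting the Korányi region geometry right, namely the volume $t^n\,dt$ and the lower bound $|1-z_1|\gtrsim\delta+t$ inside $\Gamma(\zeta)$. Only then is the logarithmic correction visible, with $cp\le 1$ forcing divergence of the $H^p$ norm and $cq>1$ giving convergence of $N(f)$. If the anisotropic geometry becomes awkward, I would first reduce to the slice function of one variable via the rotation-invariant integration formula, and then use the disk computation.
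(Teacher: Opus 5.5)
Your argument is correct apart from one slip (below), and it takes a genuinely different route from the paper.

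\textbf{How the two routes differ.} The paper constructs no function. It quotes Theorem 1(1) of \cite{MPPW} to get that every $g\in\mathcal{B}^\alpha$ induces a bounded $T_g:A^s_{n\frac{s}{p}-n-1}\to H^q$ for a suitable $s\in(p,q]$, with a case split $q>2$ versus $q\le 2$. It then uses the Hardy--Littlewood/Duren embedding $H^p\subsetneq A^s_{n\frac{s}{p}-n-1}$, and justifies strictness only by remarking that Bergman functions need not have boundary values.

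You instead use just the pointwise bound $|Rg(z)|\le\|g\|_{\mathcal{B}^\alpha}(1-|z|^2)^{-\alpha}$ and the area-function description. This is exactly the easy half of Theorem \ref{tentdom}: the tent space $AT^q_{2,-2\alpha}$ lies in every $[g:H^q]$. You then exhibit an explicit $f\in AT^q_{2,-2\alpha}\setminus H^p$ by a direct Kor\'anyi-region computation. The ingredients are $|1-z_1|\gtrsim\delta+t$ on $\Gamma(\zeta)$, from the triangle inequality for $|1-\langle z,w\rangle|^{1/2}$, and slice volume $\lesssim t^n\,dt$. The logarithmic exponent $1/q<c\le 1/p$ exploits precisely the gap between the two borderline conditions.

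\textbf{What each route buys.} Yours is self-contained, needs no external boundedness theorem or case split, and makes the strictness completely explicit. It also passes through the space that the paper later proves is the whole intersection. The paper's route is shorter and names a concrete Bergman space inside the intersection. As the paper itself notes, though, that space is strictly smaller than the intersection unless $q=2$.

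\textbf{The slip.} $\mathcal{B}^0$ is not the space of constants; it is $\{g: Rg\in H^\infty\}$ and contains, e.g., $z_1$. Only for $\alpha<0$ does $|Rg(z)|\lesssim(1-|z|^2)^{-\alpha}\to 0$ force $Rg\equiv 0$. So $\alpha=0$ is not disposed of by your trivial case. It is, however, handled verbatim by Steps 1--3, since both conditions you use, $1-2\alpha>-1$ and $2n/p>2-2\alpha$, hold at $\alpha=0$ (there $n/p=1+n/q>1$). Just restrict the trivial case to $\alpha<0$.

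\textbf{Two cosmetic points.}
\begin{itemize}
\item State the Step 3 bound for small $\delta$ only. For $\delta\ge\delta_0$ the function $f$ is bounded on $\Gamma(\zeta)$ and the inner integral is $O(1)$. As written, $\log\frac{e}{\delta+t}$ can vanish, because $\delta+t$ ranges up to $3$.
\item Get $f\notin H^p$ from Fatou's lemma applied to $|f(s\zeta)|^p\to|f(\zeta)|^p$, rather than from monotone convergence.
\end{itemize}
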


\begin{proof}
Assume $p<q$ and $g\in \mathcal{B}^{\alpha}$. We consider two cases, $q>2$ and $q\leq2$. For the case $q>2$, let $s\in (2,q)\cap(p,q)$. Then we have $2<s<q$ and from Theorem 1 (1) of \cite{MPPW}, it follows that $T_g: A^s_{n\frac{s}{p}-n-1} \to H^q$ is bounded. Therefore $ A^s_{n\frac{s}{p}-n-1} \subseteq [g : H^q]$.

Let us next assume that $q\leq 2 $. Here we could also proceed by taking $s\in (p,q)$, in which case $s\leq \min\{2,q\}$. But in fact we can let $s=q$, obtaining a larger space. According to case (1) of Theorem 1 in \cite{MPPW} (in fact, in the unit disk this case was obtained earlier by Wu \cite{Wu}), the boundedness of $T_g : A^q_{n\frac{q}{p}-n-1} \to H^q$ is equivalent to $ g\in \mathcal{B}^{\alpha}$.

Now, by using \eqref{HpAs}, the space $H^p$ embeds into $A^s_{n\frac{s}{p}-n-1}$ or $A^q_{n\frac{q}{p}-n-1}$, whichever we want. The inclusion is also strict as Bergman functions need not have boundary values almost everywhere. Therefore the proof is complete.  
\end{proof}

The following result clearly follows from the above theorem. We record it here.

\begin{coro}\label{qbig}
If $0<p<q<\infty$ and $T_g:H^p\to H^q$ is bounded, then $H^p\subsetneq [g:H^q]$.
\end{coro}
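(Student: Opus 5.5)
The corollary is a direct consequence of Proposition \ref{berg} combined with the known characterization $V_{p,q}=\mathcal{B}^\alpha$ for $p<q$, where $\alpha=1+n/q-n/p$. The plan is to first place $g$ in $\mathcal{B}^\alpha$, then note that $[g:H^q]$ contains the whole intersection $[\mathcal{B}^\alpha:H^q]$, and finally use the strict inclusion from Proposition \ref{berg}.

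First, assume $T_g:H^p\to H^q$ is bounded. By the characterization recalled in the Introduction (from \cite{Pau}), $g\in V_{p,q}=\mathcal{B}^\alpha$. Two trivial cases can be handled immediately:
\begin{itemize}
\item If $\alpha<0$, then $\mathcal{B}^\alpha$ consists of constants only, so $g$ is constant.
\item More generally, whenever $g$ is constant, $T_g=0$ and $[g:H^q]=H(\mathbb{B}_n)$. This space strictly contains $H^p$; for instance, $(1-z_1)^{-N}$ lies in $H(\mathbb{B}_n)\setminus H^p$ once $N$ is large enough.
\end{itemize}
So we may assume $g$ is a non-constant element of $\mathcal{B}^\alpha$.

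Next, by definition $[\mathcal{B}^\alpha:H^q]=\bigcap_{h\in\mathcal{B}^\alpha}[h:H^q]$, so this intersection is contained in $[g:H^q]$. On the other side, $H^p\subseteq[g:H^q]$ holds simply because $T_g$ maps $H^p$ into $H^q$.

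Finally, Proposition \ref{berg} gives $H^p\subsetneq[\mathcal{B}^\alpha:H^q]$. Its proof in fact exhibits a concrete intermediate space: there is a Bergman space $A^s_{ns/p-n-1}$, with $s\in(p,q)$ or $s=q$, that contains $H^p$ strictly and is mapped into $H^q$ by every $T_h$ with $h\in\mathcal{B}^\alpha$. Choosing $f$ in this Bergman space but not in $H^p$, we get
\[
f\in[\mathcal{B}^\alpha:H^q]\subseteq[g:H^q]\quad\text{and}\quad f\notin H^p,
\]
which gives the strict inclusion $H^p\subsetneq[g:H^q]$.

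None of these steps is a real obstacle. The only point needing care is the degenerate constant case, where the corollary still holds trivially as noted above.
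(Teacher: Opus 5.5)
Your proposal is correct and follows the paper's route: the paper records the corollary as following directly from Proposition \ref{berg}, via $g\in V_{p,q}=\mathcal{B}^\alpha$ and $[\mathcal{B}^\alpha:H^q]\subseteq[g:H^q]$, exactly as you do. Your separate treatment of constant $g$ is harmless but unnecessary, since Proposition \ref{berg} already covers it (for $\alpha<0$ the intersection $[\mathcal{B}^\alpha:H^q]$ is all of $H(\mathbb{B}_n)$, and for $\alpha\ge 0$ constants lie in $\mathcal{B}^\alpha$).
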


It turns out that, unless $q=2$, even the Bergman spaces above are smaller than the intersection. In fact, the space $[\mathcal{B}^\alpha:H^q]$ has a convenient description as a tent space.

Let $\omega\geq 0$ be a measurable function. We denote by $AT^p_q(\omega)$ the weighted tent space (originally introduced by Coifman, Meyer and Stein \cite{CMS}) consisting of analytic functions $f$ with
$$\|f\|_{AT^p_q(w)}=\left(\int_{\mathbb{S}_n}\left(\int_{\Gamma(\zeta)}|f(z)|^q w(z)dV_{1-n}(z)\right)^{p/q}d\sigma(\zeta)\right)^{1/p}.$$
These spaces are very useful for a great number of problems on the unit ball; see \cite{MPPW, OF, Pau, Per}.

When we have a standard weight $\omega(z)=(1-|z|^2)^\beta$, we write $AT^p_q(\omega)=AT^p_{q,\beta}$. These spaces will be F-spaces, at least when $q\geq 1$. We note that this space is non-trivial if $\beta>-2$, due to how we have decided to parametrise the weight range due to it being convenient for the area description of Hardy spaces. We can now precisely describe the intersection of the optimal domains in the case $p<q$.

\begin{theorem}\label{tentdom}
Let $0<p<q<\infty$ and $\alpha=1+n/q-n/p\geq 0$. We then have
$$AT^q_{2,-2\alpha}=[\mathcal{B}^\alpha:H^q].$$
If $\alpha<0$, then $[\mathcal{B}^\alpha:H^q]$ consist of all holomorphic functions on the ball.
\end{theorem}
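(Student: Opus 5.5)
The idea is to reduce everything to the area function description of $H^q$, applied to $T_gf$. Since $R(T_gf)=f\,Rg$, we have
$$\|T_gf\|_{H^q}^q\asymp \int_{\mathbb{S}_n}\left(\int_{\Gamma(\zeta)}|f(z)|^2|Rg(z)|^2\,dV_{1-n}(z)\right)^{q/2}d\sigma(\zeta).$$

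The case $\alpha<0$ is immediate. Here $\mathcal{B}^\alpha$ consists only of constants, so $T_g=0$ for every $g\in\mathcal{B}^\alpha$, and every $f\in H(\mathbb{B}_n)$ lies in $[g:H^q]$.

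For $\alpha\ge 0$, the inclusion $AT^q_{2,-2\alpha}\subseteq[\mathcal{B}^\alpha:H^q]$ also follows at once. If $g\in\mathcal{B}^\alpha$, then $|Rg(z)|^2\le \|g\|_{\mathcal{B}^\alpha}^2(1-|z|^2)^{-2\alpha}$. Inserting this into the display gives
$$\|T_gf\|_{H^q}\lesssim \|g\|_{\mathcal{B}^\alpha}\|f\|_{AT^q_{2,-2\alpha}}.$$

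The reverse inclusion is the main obstacle, because we must produce a single $g\in\mathcal{B}^\alpha$ for which $T_gf\in H^q$ forces $f\in AT^q_{2,-2\alpha}$. No single $g$ has $|Rg(z)|\gtrsim(1-|z|^2)^{-\alpha}$ everywhere. I would handle this in two steps.

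\emph{Step 1: a finite family of Bloch-type functions.} For $\alpha>0$ I would use the standard construction, as in Ramey--Ullrich or Zhu's book \cite{ZhuHol}, of finitely many functions $g_1,\dots,g_N\in\mathcal{B}^\alpha$ such that
$$\sum_{j=1}^N|Rg_j(z)|\gtrsim (1-|z|^2)^{-\alpha}$$
on $\{|z|>1/2\}$. In one variable these are lacunary series $\sum_k 2^{k\alpha}z^{2^k}$ together with a rotated copy. In the ball one uses lacunary sums of homogeneous polynomials $\langle z,\xi\rangle^{m}$ built from Ryll--Wojtaszczyk-type polynomials. For $\alpha=0$ one needs $|Rg|\gtrsim 1$; here the polynomials $g_j(z)=z_j$ suffice, since $\sum_j|Rz_j|=\sum_j|z_j|\ge|z|$ is bounded below near the sphere.

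\emph{Step 2: conclude for $f$ in the intersection.} Let $f\in[\mathcal{B}^\alpha:H^q]$. Then $T_{g_j}f\in H^q$ for each $j$, and by Theorem \ref{complete} the closed graph theorem for F-spaces applies. Summing the area-function integrals over $j$ and using the lower bound from Step 1 (with $|\cdot|^2$ handled by $(\sum_j a_j)^2\le N\sum_j a_j^2$) gives
$$\int_{\mathbb{S}_n}\left(\int_{\Gamma(\zeta)\cap\{|z|>1/2\}}|f|^2(1-|z|^2)^{-2\alpha}\,dV_{1-n}\right)^{q/2}d\sigma \lesssim \sum_{j=1}^N\|T_{g_j}f\|_{H^q}^q<\infty.$$

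The remaining part of the integral, over $\Gamma(\zeta)\cap\{|z|\le 1/2\}$, is bounded because $f$ is holomorphic, hence bounded on compact sets. This gives $f\in AT^q_{2,-2\alpha}$.

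The main technical point is Step 1 in several variables, namely ensuring the lower bound uniformly in all directions of the sphere. If that construction is not available to cite directly, a fallback is Kahane--Khinchine averaging in the style of the proof of Theorem \ref{p>q}. One would take random combinations $g_t=\sum_k r_k(t)c_k(1-|a_k|^2)^{b-\alpha}(1-\langle z,a_k\rangle)^{-b}$ over a lattice. These are uniformly in $\mathcal{B}^\alpha$, and averaging $\|T_{g_t}f\|_{H^q}^q$ in $t$ recovers $\sum_k|c_k|^2(1-|a_k|^2)^{-2\alpha}$ locally. To use this route, one needs uniform boundedness of $\|T_{g_t}f\|$ over $t$, which again comes from the closed graph theorem.
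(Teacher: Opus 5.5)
Your proof is correct, but for the main step it takes a different route from the paper. The inclusion $AT^q_{2,-2\alpha}\subseteq[\mathcal{B}^\alpha:H^q]$ and the cases $\alpha<0$ and $\alpha=0$ (testing with the coordinate functions) are as in the paper. The difference is in the reverse inclusion for $0<\alpha<1$.

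\emph{The paper's route.} It uses the closed graph theorem to make $S_f:\mathcal{B}^\alpha\to H^q$ bounded. It then tests on the random atomic sums $F_t=\sum_k r_k(t)(1-|a_k|^2)^b(1-\langle z,a_k\rangle)^{-(b+\alpha-1)}$ and uses Kahane--Khinchine to extract a square function. Subharmonicity passes to lattice values, and Lemma 3 of MPPW on a fine lattice returns to the continuous tent norm, with the $z_i^2g$ device for the factor $|a_k|^4$.

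\emph{Your route.} You use a deterministic finite family $g_1,\dots,g_N\in\mathcal{B}^\alpha$ with $\sum_j|Rg_j(z)|\gtrsim(1-|z|^2)^{-\alpha}$ near the sphere. After that, everything is a pointwise inequality inside the area integral. This needs no randomization and no discretization. It does not need the closed graph theorem either: finitely many memberships $T_{g_j}f\in H^q$ already make each area integral finite, so your appeal to Theorem~\ref{complete} is superfluous.

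\emph{What each buys.} Your route gives a sharper structural fact: $[\mathcal{B}^\alpha:H^q]=\bigcap_{j=1}^N[g_j:H^q]$ for one fixed finite family. The price is Step 1, which for $n\ge 2$ is not elementary. It rests on Ryll--Wojtaszczyk/Aleksandrov-type homogeneous polynomials with a pointwise lower bound on the sphere, using a number $N$ of polynomials independent of the degree. These are combined with interleaved lacunary series to cover all radii, so you should cite the several-variable result precisely rather than call it standard. The paper needs only the atomic decomposition of $\mathcal{B}^\alpha$. In its argument, Khinchine's identity $\int_0^1|RF_t(z)|^2\,dt=\sum_k|RF_k(z)|^2$, which is $\gtrsim(1-|z|^2)^{-2\alpha}$ near the sphere, lets the random family play the role of your finite one.

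\emph{Slips in your descriptions.} They do not affect the main line, since that only uses the stated property of the $g_j$. Both exponent errors come from forgetting that $R$ raises the order of growth by one.
\begin{itemize}
\item The one-variable model should be $\sum_k q^{k(\alpha-1)}z^{q^k}$ with a large ratio $q$. Your series $\sum_k2^{k\alpha}z^{2^k}$ has $|Rg|\approx(1-|z|)^{-\alpha-1}$, so it lies in $\mathcal{B}^{\alpha+1}$.
\item In one variable the second function must come from an interleaved lacunary sequence. A rotated copy cannot help, because $|z^m|$ is rotation invariant and the defect of a single lacunary series is radial.
\item In the fallback, the atoms $(1-|a_k|^2)^{b-\alpha}(1-\langle z,a_k\rangle)^{-b}$ have $|Rg_k|\approx(1-|a_k|^2)^{-\alpha-1}$ near $a_k$, so the $g_t$ are not uniformly in $\mathcal{B}^\alpha$. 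The numerator exponent should be $b+1-\alpha$. With that correction the fallback is essentially the paper's argument.
\end{itemize}
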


\begin{proof}
We first prove that $AT^p_{2,-2\alpha}$ is contained in $[\mathcal{B}^\alpha:H^q]$. Using the area description of $H^q$, one obtains
$$\|T_gf\|_{H^q}^q\asymp\int_{\mathbb{S}_n}\left(\int_{\Gamma(\zeta)}|f(z)|^2|Rg(z)|^2 dV_{1-n}(z)\right)^{q/2}d\sigma(\zeta).$$
Bearing in mind $g \in \mathcal{B}^\alpha$, we use the pointwise estimate
$$|Rg(z)|\leq (1-|z|^2)^{-\alpha}\|g\|_{\mathcal{B}^\alpha}.$$
We get
$$\|T_gf\|_{H^q}^q \lesssim \|g\|_{\mathcal{B}^\alpha}^q\int_{\mathbb{S}_n}\left(\int_{\Gamma(\zeta)}|f(z)|^2 (1-|z|^2)^{-2\alpha}dV_{1-n}(z)\right)^{q/2}d\sigma(\zeta).$$
This gives exactly that $T_g:AT^q_{2,-2\alpha}\to H^q$ is bounded. Observe that $\alpha<1$, so the space in non-trivial. So $AT^p_{2,-2\alpha}$ is contained in $[\mathcal{B}^\alpha:H^q]$.

We prove the other inclusion. It suffices to show that $S_g:\mathcal{B}^\alpha\to H^q$ being bounded implies that $g \in AT^q_{2,-2\alpha}$. So let us assume that $S_g$ is bounded.

We first deal with the case $\alpha=0$. In that case, we test with $F_k(z)=z_k$, and use the area description of of $H^q$ as above. This shows that $z_k g(z)$ belongs to $AT^q_{2,0}$ for every $k=1,2,...,n$. As in the end of the proof of Theorem \ref{p>q}, the leads to $g \in AT^q_{2,0}$.

We now turn to the case $0<\alpha<1$. The idea is similar to that of Theorem \ref{p>q}, and to this end we use the atomic decomposition of $\mathcal{B}^\alpha$. This result can be found in \cite{ZZ} (if $0<\alpha<1$, the space $\mathcal{B}^\alpha$ is the same as the Lipschitz space $\Lambda_{1-\alpha}$ in the notation and terminology of the reference), where it is explained how it directly follows from the atomic decomposition of the Bloch space \cite{ZhuHol}. Let $\{a_k\}$ be a $\rho$-lattice in the Bergman metric. For every $\{\lambda_k\}\in \ell^\infty$, the functions
$$F_t(z)=\sum_{k}\lambda_k r_k(t)\frac{(1-|a_k|^2)^b}{(1-\langle z,a_k\rangle)^{b+\alpha-1}}$$
belong to $\mathcal{B}^\alpha$, when $b>n$. The functions $r_k$ are the Rademacher functions like in Theorem \ref{p>q}.

By using the set of of techniques demonstrated in the proof of Theorem \ref{p>q}, and the calculations leading up to \eqref{equ1} to control $S_gF_t$, one arrives at the estimate
\begin{equation}\label{alpha}
\int_{\mathbb{S}_n}\left(\sum_{a_k \in \Gamma(\zeta)} |\lambda_k|^2 |a_k|^4 |g(a_k)|^{2} (1-|a_k|^2)^{-2\alpha+2}\right)^{q/2}d\sigma(\zeta).
\end{equation}
If we select $\lambda_k=1$ for every $k$, we get the discrete variant of what we want. Assuming $\rho>0$ is small enough, we can use Lemma 3 of \cite{MPPW} and reason as in the proof of Theorem \ref{p>q} to get that $ g \in AT^q_{2,-2\alpha}$.

The latter claim is clear: If $\alpha<0$, then $\mathcal{B}^\alpha$ consists of the constant functions only, and $T_g$ is the zero operator.
\end{proof}

We note that $$AT^2_{2,-2\alpha}=A^2_{2,-2-n+\frac{2n}{p}}=A^2_{\frac{2n}{p}-n-1},$$ so the simple argument in Proposition \ref{berg} is enough to find the correct intersection in this case.

\section{The case $p=q$ in the unit ball}

\noindent Since the case $p=q$ is where the problem was initially studied for the unit disk, investigation of this case is in order for higher dimension. We first observe that if $g \in BMOA$, then $H^p\subsetneq [g:H^p]$. The proof follows the same reasoning as the one-dimensional proof in \cite{BDNS}, where it can be reduced to the case $p=q=2$ and a theorem due to Anderson \cite{And}.

When $n=1$, one can use monomials $z^k$ as test functions. In fact, these functions being suitable is closely related to them being inner functions on the unit disk. When $n\geq 2$, one needs to employ inner functions on the unit ball. The existence of non-constant inner functions in higher dimension is by no means a trivial matter. The first construction was in 1982 and due to Aleksandrov \cite{Alek}, but around the same time, the works of Hakim and Sibony \cite{HS} and L\o w \cite{Loew} obtained the same result independently. A reader interested in the development of this interesting topic might want to look at the following paper of Rudin \cite{RudIn}.

\begin{theorem}\label{pisq}
Let $0<p<\infty$ and $g \in BMOA$. Then $H^p\subsetneq [g:H^p]$.
\end{theorem}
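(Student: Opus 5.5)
The argument runs as in Proposition \ref{pbig}, via the open mapping theorem, but compactness is replaced by a lower bound that fails along a sequence of inner functions. Fix $g\in BMOA$ with $g$ non-constant; for constant $g$ the optimal domain is all of $H(\mathbb{B}_n)$ and the claim is trivial. Suppose $H^p=[g:H^p]$. The inclusion $H^p\hookrightarrow[g:H^p]$ is continuous, bijective, and acts between F-spaces; completeness of $[g:H^p]$ is Theorem \ref{complete}. By the open mapping theorem \cite{Rud} it is an isomorphism, so $T_g:H^p\to H^p$ is bounded below:
$$\|f\|_{H^p}\lesssim \|T_gf\|_{H^p},\qquad f\in H^p.$$
It therefore suffices to find $f_k\in H^p$ with $\|f_k\|_{H^p}=1$ and $\|T_gf_k\|_{H^p}\to0$.

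For the test functions, take a non-constant inner function $\varphi$ on $\mathbb{B}_n$ (Aleksandrov, Hakim--Sibony, L\o w) with $\varphi(0)=0$, and set $f_k=\varphi^k$. Then $|f_k|=1$ a.e.\ on $\mathbb{S}_n$, so $\|f_k\|_{H^p}=1$ for every $p$, while $f_k\to0$ uniformly on compact subsets, since $|\varphi(z)|\le|z|$ by Schwarz's lemma. The plan is to reduce to $p=2$ as in \cite{BDNS}. One way is to note $T_g(f^m)$ versus $T_g f$ bounds; more robustly, one shows the bounded-below property transfers between exponents. A cleaner route is to work directly with the area description. By \cite{AB, Pau},
$$\|T_gf_k\|_{H^p}^p\asymp\int_{\mathbb{S}_n}\Big(\int_{\Gamma(\zeta)}|\varphi(z)|^{2k}|Rg(z)|^2dV_{1-n}(z)\Big)^{p/2}d\sigma(\zeta).$$
For $p=2$ this equals $\int_{\mathbb{B}_n}|\varphi|^{2k}|Rg|^2(1-|z|^2)\,dV$, up to constants. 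Since $g\in BMOA$, the measure $|Rg|^2(1-|z|^2)\,dV$ is a (vanishing-free) Carleson measure, and in particular finite. We have $|\varphi|^{2k}\le1$ and $|\varphi|^{2k}\to0$ pointwise in $\mathbb{B}_n$, so dominated convergence gives $\|T_gf_k\|_{H^2}\to0$. This contradicts the lower bound.

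For general $p$, apply dominated convergence twice. The inner integrand $|\varphi|^{2k}|Rg|^2$ is dominated by $|Rg|^2$, so the inner integral tends to $0$ for every $\zeta$ for which $\int_{\Gamma(\zeta)}|Rg|^2dV_{1-n}<\infty$. The outer integrand is dominated by $\big(\int_{\Gamma(\zeta)}|Rg|^2dV_{1-n}\big)^{p/2}$, and this is integrable on $\mathbb{S}_n$ because $g\in BMOA\subset H^p$, using the area description of $H^p$ once more. Hence $\|T_g\varphi^k\|_{H^p}\to0$ while $\|\varphi^k\|_{H^p}=1$. So $T_g$ is not bounded below, and the inclusion is strict.

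The main point to check is the existence of inner functions in $\mathbb{B}_n$ with $\varphi(0)=0$; composing with a ball automorphism handles the normalisation. Beyond that, one needs the area-function equivalence for $T_gf$, including the constant term $T_gf(0)=0$. If the aperture changes in the area description cause trouble, fix $\gamma$ once and use the same aperture throughout. With this route, Anderson's theorem \cite{And} is not needed.
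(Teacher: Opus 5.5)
Your proof is correct, and its skeleton matches the paper's. The open mapping theorem reduces the claim to showing that $T_g:H^p\to H^p$ is not bounded below. The powers $\varphi^k$ of a non-constant inner function serve as test functions, with dominated convergence when $p=2$. As you note, that step uses only finiteness of $|Rg|^2(1-|z|^2)\,dV$, i.e.\ $g\in H^2$, not the Carleson property.

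Where you depart from the paper is the case $p\neq 2$. The paper proves $p=2$ via the Littlewood--Paley formula and then invokes the argument of Lemma~1 of \cite{BDNS} to reduce general $p$ to $p=2$. You instead apply the area-function description of $H^p$ directly to $R(T_g\varphi^k)=\varphi^k Rg$ and use dominated convergence twice. The dominating function $\bigl(\int_{\Gamma(\zeta)}|Rg|^2\,dV_{1-n}\bigr)^{p/2}\in L^1(\sigma)$ comes from $g\in BMOA\subset H^p$. Your route is self-contained in the ball and treats all $0<p<\infty$ uniformly. It also avoids having to check that a reduction lemma proved on the disk transfers to $\mathbb{B}_n$. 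The paper's version is shorter on the page only because it outsources that step.

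One small correction on the normalisation: $\varphi(0)=0$ is obtained by composing with a disk automorphism on the outside, not a ball automorphism. In any case you do not need it, since $|\varphi(z)|<1$ on $\mathbb{B}_n$ already gives the pointwise convergence $|\varphi|^{2k}\to 0$ that dominated convergence requires.
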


\begin{proof}
By the same reasoning as in Theorem 2 of \cite{BDNS} -- see also the proof of Theorem \ref{pbig} in the present paper -- it suffices to show that $T_g:H^p\to H^p$ is not bounded below.

We first consider the case $p=2$. If $f \in H^2$, then the $H^2$ norm of $T_gf$ is comparable to
\begin{equation}\label{H2}
\int_{\mathbb{B}_n}|f(z)|^2 |Rg(z)|^2 (1-|z|^2)dV(z).
\end{equation}
Note that $g \in BMOA$ if and only if the measure $d\mu_g(z)=|Rg(z)|(1-|z|^2)dV(z)$ is a Carleson measure, so that $H^2$ embeds boundedly into $L^2_{\mu_g}$. Now, let $\varphi$ be an inner function on the unit ball, and consider $f_k=\varphi^k$. With these choices, one has $|\varphi(z)|< 1$ for every point $z \in \mathbb{B}_n$ and $\|f_k\|_{H^2}=1$ for every $k$. By the dominated convergence theorem, the equivalent norms \eqref{H2} tend to zero as $k\to \infty$. This shows that $T_g:H^2\to H^2$ is not bounded below.

Now, if $p\neq 2$, the argument of Lemma 1 of \cite{BDNS} shows how to reduce this case to the case $p=2$ above.
\end{proof}

We can collect Theorem \ref{pbig}, Corollary \ref{qbig}, and Theorem \ref{pisq} as one corollary.

\begin{coro}\label{always}
Let $0<p,q<\infty$ and $T_g:H^p\to H^q$ be bounded. Then we have $H^p\subsetneq [g:H^q]$.
\end{coro}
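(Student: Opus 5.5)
The corollary is a matter of case bookkeeping. I will split according to the order relation between $p$ and $q$, and in each case use the characterization of $V_{p,q}$ from the Introduction to say what boundedness of $T_g$ means.

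\emph{Case $p>q$.} If $T_g:H^p\to H^q$ is bounded, then $g\in V_{p,q}=H^r$ with $r=pq/(p-q)$. Proposition \ref{pbig} then applies directly and gives $H^p\subsetneq[g:H^q]$. The argument there used that $T_g$ is compact for this range, so it cannot be bounded below, together with the open mapping theorem for F-spaces.

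\emph{Case $p<q$.} Boundedness means $g\in\mathcal{B}^\alpha$ with $\alpha=1+n/q-n/p$.
\begin{itemize}
\item If $\alpha<0$, then $g$ is constant and $T_g=0$. By Theorem \ref{tentdom}, or directly from the definition, $[g:H^q]=H(\mathbb{B}_n)$. This strictly contains $H^p$, since $H^p$ is a proper subspace of $H(\mathbb{B}_n)$; for example, $(1-z_1)^{-N}\notin H^p$ for $N$ large.
\item If $\alpha\ge0$, Corollary \ref{qbig} (equivalently, Proposition \ref{berg}) gives a Bergman space $A^s_{ns/p-n-1}$ with
\[
H^p\subsetneq A^s_{ns/p-n-1}\subseteq[g:H^q],
\]
so the inclusion $H^p\subseteq[g:H^q]$ is strict.
\end{itemize}

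\emph{Case $p=q$.} Boundedness means $g\in BMOA$, and Theorem \ref{pisq} yields $H^p\subsetneq[g:H^p]$. That proof uses the powers $\varphi^k$ of an inner function on the ball to show that $T_g$ is not bounded below on $H^2$, and then the reduction of Lemma 1 of \cite{BDNS} for general $p$.

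There is no real obstacle here; the only point needing care is the degenerate situations. When $g$ is constant, $T_g=0$ and $[g:H^q]$ is all of $H(\mathbb{B}_n)$. This is strictly larger than $H^p$, so the claim still holds, though Theorem \ref{complete} does not apply since the topology is trivial.
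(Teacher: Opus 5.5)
Your proposal is correct and matches the paper's argument: the corollary is obtained by combining Proposition \ref{pbig}, Corollary \ref{qbig} and Theorem \ref{pisq} according to whether $p>q$, $p<q$ or $p=q$, using the characterization of $V_{p,q}$. Treating constant $g$ separately is a sensible precaution the paper leaves implicit: there $[g:H^q]=H(\mathbb{B}_n)$ carries a non-Hausdorff topology, the open-mapping argument does not apply, and the strict inclusion is simply trivial.
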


As mentioned in the beginning of the paper, when $n=1$ the identity $H^p=[BMOA:H^p]$ for $p\geq 1$ was proved in \cite{BDNS}. We extend this result to the Hardy spaces of the unit ball for the full range $0<p<\infty$. We note that the proof in completely different from \cite{BDNS}, which is based on recovering $g$ from $S_g$ by feeding the operator suitable simple test functions. We have not been able the carry out the same proof in the unit ball.

\begin{theorem}\label{same}
Let $0<p<\infty$. We have $H^p=[BMOA:H^p]$.
\end{theorem}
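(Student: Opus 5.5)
The inclusion $H^p\subseteq [BMOA:H^p]$ is immediate, since $T_g:H^p\to H^p$ is bounded for every $g\in BMOA$. For the converse I will argue as in the proof of Theorem \ref{compact}. Let $f\in[BMOA:H^p]$. Then $S_f g=T_g f-$(a constant) lies in $H^p$ for every $g\in BMOA$, where we normalize $g(0)=0$. By the closed graph theorem for F-spaces, $S_f:BMOA\to H^p$ is bounded, so it suffices to prove the following: if $S_f:BMOA\to H^p$ is bounded, then $f\in H^p$. By the area function description of $H^p$, this amounts to showing
$$\int_{\mathbb{S}_n}\left(\int_{\Gamma(\zeta)}|f(z)|^2|Rg(z)|^2dV_{1-n}(z)\right)^{p/2}d\sigma(\zeta)\lesssim \|S_f\|^p\|g\|_{BMOA}^p$$
forces $f$ into the admissible-maximal (or area) tent space description of $H^p$.

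I will test with randomized atoms of $BMOA$, following the scheme of Theorem \ref{p>q} and Theorem \ref{tentdom}. Let $\{a_k\}$ be a $\rho$-lattice and, for $\{\lambda_k\}$ a Carleson sequence (i.e.\ $\sum_k|\lambda_k|^2(1-|a_k|^2)^n\delta_{a_k}$ is a Carleson measure, which is the $T^\infty_2$ condition), consider
$$F_t(z)=\sum_k\lambda_k r_k(t)\left(\frac{1-|a_k|^2}{1-\langle z,a_k\rangle}\right)^b,\qquad b>n.$$
These functions lie in $BMOA$ uniformly in $t$, by the atomic decomposition of $BMOA$ viewed as the holomorphic part of the tent space $T^\infty_2$. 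Applying Kahane's and Khinchine's inequalities, localizing to Bergman balls and using subharmonicity exactly as in the derivation of \eqref{equ1} gives
$$\int_{\mathbb{S}_n}\left(\sum_{a_k\in\Gamma(\zeta)}|\lambda_k|^2|a_k|^2|f(a_k)|^2\right)^{p/2}d\sigma(\zeta)\lesssim\|S_f\|^p\|\lambda\|^p_{T^\infty_2}.$$
The goal is then $\{|a_k||f(a_k)|\}\in T^p_\infty$, and afterwards Lemma 3 of \cite{MPPW} (for small $\rho$) together with the coordinate trick $z_if(z)$ from the end of Theorem \ref{p>q} yields $f\in H^p$.

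The main obstacle is the final step: the relevant index is now $r=\infty$, so the duality and factorization argument from Theorem \ref{p>q} (where one used $T^{(rs)'}_1$) degenerates. Here is what I would try first. Take $s$ large and use the factorization $T^{1}_1\cdot$-type: each positive $\mu\in T^{(ps)'}_{1}$-dual object should be written as $\tau\lambda^{1/s}$ with $\lambda\in T^\infty_2$, i.e.\ $\lambda^{1/s}\in T^\infty_{2s}$, and $\tau\in T^{(ps)'}_{(2s)'}$. This is the endpoint case $T^{\infty}_{2s}\cdot T^{(ps)'}_{(2s)'}=T^{(ps)'}_1$ of the Cohn--Verbitsky factorization, which is known to hold at the $T^\infty$ endpoint for the tent spaces (this is precisely Cohn--Verbitsky's original setting). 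With that factorization, the Hölder computation of Theorem \ref{p>q} goes through verbatim with $q$ replaced by $p$, giving $\{|a_k||f(a_k)|^{1/s}\cdots\}\in T^{ps}_\infty$ and hence $\|\{|a_k|f(a_k)\}\|_{T^p_\infty}\lesssim\|S_f\|$. Should the endpoint factorization be unavailable in the needed discrete form, the fallback is to fix $\zeta$-localized choices of $\lambda$ as indicator functions of discrete Carleson boxes chosen via a stopping-time (good-$\lambda$) argument relating the maximal function to the square function of $f$.
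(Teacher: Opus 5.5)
Your proposal is correct and follows the paper's proof essentially step for step. Both arguments reduce, via the closed graph theorem, to boundedness of $S_f:BMOA\to H^p$. Both then test with randomized atoms with Carleson coefficients $\{\lambda_k\}\in T^\infty_2$ to obtain the estimate \eqref{equinf}, apply duality with a power trick and tent-space factorization, and finish with Lemma 3 of \cite{MPPW} and the coordinate trick.

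The endpoint factorization $T^{(ps)'}_1=T^\infty_{2s}\cdot T^{(ps)'}_{(2s)'}$ that you invoke is exactly what the paper builds (its $s$ is your $2s$). It combines $T^{(ps/2)'}_1=T^{(ps/2)'}_\infty\cdot T^\infty_1$, the split $\lambda_k^2=\lambda_k^{2/s'}\lambda_k^{2/s}$ and the product inclusion $T^{(ps/2)'}_\infty\cdot T^\infty_{s'}\subseteq T^{(ps/2)'}_{s'}$, so your stopping-time fallback is not needed. One harmless slip: the radial derivative of the atoms produces the weight $|a_k|^4$, not $|a_k|^2$.
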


\begin{proof}
We only need to show that each element in $[BMOA:H^p]$ is an $H^p$ function. To show this, it is sufficient to show that the boundedness of $S_g:BMOA\to H^p$ implies $g \in H^p$. However, we will get the characterization for the boundedness of $S_g$ from the boundedness of $T_g:H^p\to H^p$.

Again, we invoke the machinery used in Theorem \ref{p>q}. Let again $Z=\{a_k\}$ be a $\rho$-lattice in the Bergman metric. By Theorem 5.29 of \cite{ZhuHol}, if $b>n$, then the function
$$F_t(z)=\sum_k \lambda_k r_k(t)\left(\frac{1-|a_k|^2}{1-\langle z,a_k\rangle}\right)^b$$
belongs to $BMOA$, if the sequence $\{\lambda_k\}$ is chosen so that
$$d\mu_{\lambda}=\sum_k |\lambda_k|^2 (1-|a_k|^2)^n d\delta_{a_k}$$
is a Carleson measure. In the language of tent spaces, this means that $\{\lambda_k\} \in T^{\infty}_2$, or $\{\lambda_k^2\} \in T^\infty_1$. This is a well-known fact about tent spaces, see Section 2.6 of \cite{MPPW} for the formulation on the unit ball.

We assume boundedness of $S_g:BMOA\to H^p$, test with $S_gF_t$, and estimate as we did leading up to \eqref{equ1}, to obtain

\begin{equation}\label{equinf}
	\int_{\mathbb{S}_n} \left( \sum_{{a_k}\in \Gamma(\zeta)}    |\lambda_k|^2 |a_k|^4 |g(a_k)|^2  \right)^{p/2}  \,d\sigma(\zeta) \lesssim \|S_g\|^p \,\|\lambda\|^p_{T^\infty_2}.
\end{equation}
Now, writing $\nu_k=|a_k|^2 |g(a_k)|$, we aim to show that $\{\nu_k\} \in T^p_\infty$. This is the same as showing that $\{\nu_k^{2/s}\}\in T^{ps/2}_\infty$. We choose $s$ big enough so that $ps/2>1$ and $s>1$ hold true. This means that $\{\nu_k^{2/s}\}$ belongs to the dual of $T^{(ps/2)'}_1$. According to the formula in the beginning of the proof of Proposition 6 in \cite{MPPW}, we can factorize $$T^{(ps/2)'}_1=T^{(ps/2)'}_\infty \cdot T^\infty_1.$$ Therefore, we look at 
$$\sum_k \tau_k \lambda_k^2 \nu_k^{2/s} (1-|a_k|^2)^n\asymp \int_{\mathbb{S}_n}\left(\sum_{a_k \in \Gamma(\zeta)}\tau_k\lambda_k^2 \nu_k^{2/s}\right)d\sigma(\zeta), $$
where $\{\tau_k\} \in T^{(ps/2)'}_\infty$.

We now make the split $\lambda_k^2=\lambda_k^{2/s'}\lambda_k^{2/s}$, observe that $\{\lambda_k^{2/s'}\}\in T^\infty_{s'}$, and conclude that if $\alpha_k=\tau_k \lambda_k^{2/s'}$, then $\{\alpha_k\}\in T^{(ps/2)'}_{s'}$.

With these preparations we do a Hölder's inequality with $s'$ and $s$, and arrive at
$$\int_{\mathbb{S}_n}\left(\sum_{a_k \in \Gamma(\zeta)}\tau_k\lambda_k^2 \nu_k^{2/s}\right)d\sigma(\zeta)\leq \int_{\mathbb{S}_n}\left(\sum_{a_k \in \Gamma(\zeta)} \alpha_k^{s'}\right)^{1/s'}\left(\sum_{a_k \in \Gamma(\zeta)} \lambda_k^2 \nu_k^2 \right)^{1/s}d\sigma(\zeta).$$

Finally, we do another Hölder with $(ps/2)'$ and $ps/2$, so that we have proven
$$\sum_k \tau_k \lambda_k^2 \nu_k^{2/s} (1-|a_k|^2)^n\leq \|\alpha\|_{T^{(ps/2)'}_{s'}}\left(\int_{\mathbb{S}_n} \left( \sum_{{a_k}\in \Gamma(\zeta)}    \lambda_k^2 \nu_k^2 \right)^{p/2}  \,d\sigma(\zeta)\right)^{2/ps}.$$

Since $$\|\alpha\|_{T^{(ps/2)'}_{s'}}\lesssim \|\tau\|_{T^{(ps/2)'}_\infty}\cdot \|\lambda^{2/s'}\|_{T^\infty_{s'}},$$
we return to the estimate \eqref{equinf}, and see that 

$$\sum_k \tau_k \lambda_k^2 \nu_k^{2/s} (1-|a_k|^2)^n \lesssim \|\tau\|_{T^{(ps/2)'}_\infty}\cdot \|\lambda^{2}\|_{T^\infty_{1}}^{1/s'}\cdot\|\lambda^2\|_{T^\infty_1}^{1/s}\|S_g\|^{2/s}.$$

By factorization, we finally get

$$\sum_k \tau_k \lambda_k^2 \nu_k^{2/s} (1-|a_k|^2)^n\lesssim \|\tau \lambda^2\|_{T^{(ps/2)'}_1}\|S_g\|^{2/s}.$$

Therefore, $\{\nu_k^{2/s}\}$ must belong to $T^{ps/2}_\infty$, with the norm bound $\|\nu^{2/s}\|_{T^{(ps/2)}_\infty}\lesssim \|S_g\|^{2/s}$. This is equivalent to the membership of $\{\nu_k\}$ in $T^p_\infty$ and $\|\nu\|_{T^p_\infty}\lesssim \|S_g\|$. We can use the same reasoning as in  the end of the proof of Theorem \ref{p>q} to see how this leads to $g \in H^p$ with the appropriate norm bound.
\end{proof}

\begin{rk}
Regarding the atomic decomposition used in the proofs of the main results. In order for it to be a true atomic decomposition, meaning that every element of the corresponding space can be obtained this way, one needs a refinement of the given Bergman lattice. However, as noted in \cite{ZhuHol}, the variant used in the present paper is enough to produce a function in the corresponding space.
\end{rk}

\begin{rk}
The proof goes through with a much easier factorization, if $p>2$. Given that the case $p=2$ is usually the easiest by far, we find it surpising that the proof above requires so much extra effort even in that case.
\end{rk}


\end{document}